\theoremstyle{plain}
 \newtheorem{thm}{Theorem}[section]
 \newtheorem{cor}[thm]{Corollary}
 \newtheorem{lem}[thm]{Lemma}
 \theoremstyle{definition}
 \newtheorem{defn}[thm]{Definition}
 \theoremstyle{remark}
 \newtheorem{rem}[thm]{Remark}
 \numberwithin{equation}{section}
\begin{document}

\title[steady solution for the fractal Burgers equation]{Uniqueness and stability of steady-state solution with finite energy to the fractal Burgers equation}

\author[ Fei Xu, Yong Zhang, Fengquan Li]{ Fei Xu, Yong Zhang, Fengquan Li}

\address[Yong Zhang]{ School of Mathematical Sciences, Dalian University
of Technology, Dalian, 116024, China} \email{18842629891@163.com}

\address[Fei Xu]{School of Mathematical Sciences, Dalian University
of Technology, Dalian, 116024, China}

\address[Fengquan Li]{School of Mathematical Sciences, Dalian University
of Technology, Dalian, 116024, China}

\begin{abstract}
The paper is concerned with the steady-state Burgers equation of fractional dissipation on the real line. We first prove the global existence of viscosity weak solutions to the fractal Burgers equation (\ref{e11}) driven by the external force. Then the existence and uniqueness of solution with finite $H^{\frac{\alpha}{2}}$ energy to the steady-state equation (\ref{e12}) are established by estimating the decay of fractal Burgers' solutions. Furthermore, we show that the unique steady-state solution is nonlinearly stable, which means any viscosity weak solution of (\ref{e11}), starting close to the steady-state solution, will return to the steady state as $t\rightarrow\infty$.
	\end{abstract}
\maketitle

\section{Introduction and main results}
In this paper, we consider the forced Burgers equation of fractional order
\begin{equation}\label{e11}
u_{t}+uu_{x}+\Lambda^{\alpha}u=f,\quad (x,t)\in R\times(0,T),
\end{equation}
 where $\alpha\in (1, \frac{3}{2})$ and $\Lambda^{\alpha}u=(-\partial_{xx})^{\frac{\alpha}{2}}u$ is defined by the Fourier transform
$$
\widehat{\Lambda^{\alpha}u}(\xi)=|\xi|^{\alpha}\hat{u}(\xi).
$$
If $u_{t}\equiv0$, then equation (\ref{e11}) would reduce to the steady-state equation
\begin{equation}\label{e12}
\frac{1}{2}(U^{2})_{x}+\Lambda^{\alpha}U=f,
\end{equation}
where $U$ and $f$ are independent of time variable $t$.

Burgers equation can be viewed as the simplest partial differential equation to model the Euler and Navier-Stokes equations's nonlinearity. Since the studies by Burgers in the 1940s, there are many important investigations on (\ref{e11}) without external force, i.e., $f=0$.  If removing the dissipative term $\Lambda^{\alpha}u$, the equation is perhaps the most
basic example to lead to shocks. If $\alpha = 2$, it provides an accessible
model for studying the interaction between nonlinear and dissipative phenomena. Besides, in recent years, there has been a great deal of interest in using the fractional dissipation to describe diverse physical phenomena, such as anomalous diffusion and quasi-geostrophic flows, turbulence and water waves and molecular dynamics (see \cite{13,16,17,18} and the references therein). Burgers equation with fractional dissipation, i.e. $0<\alpha<2$, has received an extensive amount of attention such as in \cite{1,2,20}. In the supercritical dissipative case ($0<\alpha<1$), the equation is locally well-posed and its solution develops gradient blow-up in finite time. In the critical dissipative case ($\alpha=1$) and subcritical dissipative case ($1<\alpha<2$), such singularity does not appear so that solution always exists globally in time. In additin, the results on the global regularizing effects in the subcritical case and the non-uniqueness of weak solutions in the supercritical case were also established in \cite{11} and \cite{12}, respectively. The author in \cite{3} obtained the analyticity and large time behavior for Burgers equation with critical dissipation.

In the case of $f=f(t,x)\neq 0$ and $\alpha=2$, equation (\ref{e11}) is reduced to the non-autonomously forced Burgers equation. The authors in \cite{4} investigated its Dirichlet and periodic boundary value problems and obtained the unique $H^{1}$ bounded trajectory, which attracts all trajectories both in pullback and forward sense. In \cite{5}, the existence of the time-periodic solution was established, where the authors also proved that this time-periodic solution was unique and asymptotically stable in the $H^{1}$ sense. For the case of  $1<\alpha<\frac{3}{2}$, we recently obtained the existence and uniqueness of time-periodic solution to (\ref{e11}) in \cite{19}. In this paper, we mainly consider the steady-state equation (\ref{e12}) with $1<\alpha<\frac{3}{2}$ and $f=f(x)\neq 0$. It's worth noting that the index $\alpha=\frac{3}{2}$ may be a limit by using the energy method. It's expected to extend this result to the case of $1< \alpha< 2$ in future investigation.

The paper is organized as follows. In section 2, we collect some useful lemmas, which would be used throughout the paper. In section 3, we prove the global existence of viscosity weak solutions to (\ref{e11}) by using Galerkin method, where the compactness (Aubin-Lions) lemma plays a key role. In section 4, we establish
the existence and uniqueness of solution to the steady-state equation (\ref{e12}), which is a novel promotion of Bjorland and Schonbek \cite{6} for Navier-Stokes equation in $R^{3}$ and Dai \cite{7} for quasi-geostrophic equation in $R^{2}$. In comparison, since
the velocity $u$ is not divergence free in $R$, we no-longer have energy-like inequality,
which naturally gives control of the spatial  $L^{2}$ norm of solutions.
 Here the nontrivial part is to establish the boundedness of $\|U\|_{L^{2}}$. Although the Poincar\'{e} type inequality is not available in the whole space $R$, we first establish the decay rate of the evolutionary solution in $R$. In contrast to the Navier-Stokes equation and quasi-geostrophic equation, the equation (\ref{e11}) has a weaker dissipative term $\Lambda^{\alpha}u$ with $\alpha \in (1, \frac{3}{2})$, which presents a serious obstacle
 to establish the decay rate. Once the finite energy estimate is established, the smallness assumption on the force yields the uniqueness of the steady-state solution. The section 5 is devoted to establish that the steady-state solution $U$ is nonlinearly stable in the sense: let $\theta\in L^{2}$ be a perturbation and $u(x,t)$ be a viscosity weak solution of the forced Burgers equation (\ref{e11}) with initial data $u_{0}=\theta+U$, then $\lim_{t\rightarrow\infty}\|u(t)-U\|_{L^{2}}=0$. The boundedness of a viscosity solution in $L^{\infty}(R)$ is essential in the stability analysis, which would been given in subsection 5.2.

Now let's state our main results in the following. Throughout this paper, we use $C$ to denote the universal constant and use $C(s)$ to denote the positive constant depending on $s$.

\begin{thm}(Main theorem)\label{thm1.4}
For any given $0<\epsilon<1$, assume $\alpha\in(1,\frac{3}{2+\epsilon})$ and $f\in X=\dot{H}^{-\frac{\alpha}{2}}(R)\cap H^{\frac{\alpha}{2}}(R)$ satisfies
$$
\widehat{f}(\xi)=0, ~~as~~ \xi\in\{~|\xi|<\rho~~for ~some~\rho>0\},
$$
then there exist a constant $C(\alpha, \epsilon)$ such that if $\|f\|_{X}\leq C(\alpha, \epsilon)$,
 equation (\ref{e12}) admits a unique weak solution $U\in H^{\frac{\alpha}{2}}(R)$ in the sense that
\begin{equation}\label{e14}
-\frac{1}{2}\langle U^{2}, \psi_{x}\rangle+\langle \Lambda^{\frac{\alpha}{2}}U, \Lambda^{\frac{\alpha}{2}}\psi\rangle=\langle f, \psi\rangle,~~for~~\psi\in C_{0}^{\infty}(R),
\end{equation}
with the finite $H^{\frac{\alpha}{2}}$ norm
\begin{equation}\label{e15}
\|U\|_{H^{\frac{\alpha}{2}}(R)}\leq C\epsilon^{-1}\|f\|_{X}.
\end{equation}
\end{thm}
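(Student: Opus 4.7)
My plan is to obtain $U$ as the long-time limit of an evolutionary viscosity weak solution of~\eqref{e11} and then to invoke smallness of $f$ to force uniqueness.

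Let $u$ denote the global viscosity weak solution of~\eqref{e11} with $u_0=0$ furnished by Section~3. Testing the equation with $u$ and using the nonlinear cancellation $\int uu_x\cdot u\,dx = \frac{1}{3}\int(u^3)_x\,dx = 0$ gives the energy identity
$$\frac{d}{dt}\|u\|_{L^2}^2 + 2\|\Lambda^{\alpha/2}u\|_{L^2}^2 = 2\langle f,u\rangle,$$
which controls $\|\Lambda^{\alpha/2}u\|_{L^2}$ only in time-averaged form. Since the Poincar\'e inequality is unavailable on $\R$, this does not yet bound $\|u(t)\|_{L^2}$ pointwise in $t$.

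The crux, and the step I expect to be the main obstacle, is promoting this to a uniform bound $\sup_t \|u(t)\|_{L^2} \le C \epsilon^{-1}\|f\|_X$ despite the weak dissipation $\alpha<3/2$. I would use a Fourier-splitting scheme in the spirit of Schonbek: combine the dissipation lower bound
$$\|\Lambda^{\alpha/2}u\|_{L^2}^2 \ge K(t)^{\alpha}\Big(\|u\|_{L^2}^2 - \int_{|\xi|<K(t)}|\hat u(\xi)|^2\,d\xi\Big)$$
with an estimate of the low-frequency tail of $\hat u$. Because $\hat f$ is supported in $\{|\xi|\ge\rho\}$ and $u_0\equiv 0$, the Duhamel representation gives $\hat u(t,\xi) = -\frac{i\xi}{2}\int_0^t e^{-(t-s)|\xi|^{\alpha}}\widehat{u^2}(s,\xi)\,ds$ for $|\xi|<\rho$, and the trivial bound $|\widehat{u^2}(\xi)|\le\|u\|_{L^2}^2$ combined with a Gagliardo--Nirenberg interpolation should close an ODE of the form $y'+\eta y\lesssim\|f\|_X^2$ for $y=\|u\|_{L^2}^2$ after tuning $K(t)$. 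The factor $\epsilon^{-1}$ arises because the relevant interpolation exponents degenerate as $\alpha\nearrow 3/2$, which is precisely what the hypothesis $\alpha<3/(2+\epsilon)$ quantifies. Together with the energy identity this yields $\sup_t\|u(t)\|_{H^{\alpha/2}}\le C\epsilon^{-1}\|f\|_X$.

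Weak-$*$ compactness along a sequence $t_n\to\infty$ produces $u(t_n)\rightharpoonup U$ in $H^{\alpha/2}$. Using the Aubin--Lions lemma on shifted intervals $[t_n,t_n+1]$ to upgrade to strong $L^2_{\mathrm{loc}}$ convergence of $u^2$, one passes to the limit in the weak formulation, obtaining~\eqref{e14} and inheriting~\eqref{e15}. For uniqueness, if $U_1,U_2$ both solve~\eqref{e14} and $V=U_1-U_2$, testing the difference equation with $V$ and integrating by parts yields
$$\|\Lambda^{\alpha/2}V\|_{L^2}^2 = -\tfrac{1}{4}\int V^2(U_1+U_2)_x\,dx,$$
which by a Kato--Ponce / paraproduct product estimate is bounded by $C\|\Lambda^{\alpha/2}V\|_{L^2}^2(\|U_1\|_{H^{\alpha/2}}+\|U_2\|_{H^{\alpha/2}})$. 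Invoking~\eqref{e15} and the smallness $\|f\|_X\le C(\alpha,\epsilon)$ absorbs the right-hand side and forces $V\equiv 0$.
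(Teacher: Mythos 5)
Your route differs fundamentally from the paper's: the paper does not take a long-time limit of the nonlinear flow at all. It runs a Bjorland--Schonbek-type iteration: for a frozen drift $U^{i}$ it solves the \emph{linear} evolution problem $\partial_t u^{i+1}+\tfrac12(U^{i}u^{i+1})_x+\Lambda^{\alpha}u^{i+1}=0$ with initial datum $f$, proves via Fourier splitting (using the spectral gap of $\widehat f$) the decay $\|u^{i+1}(t)\|_{L^2}\lesssim \epsilon^{-1}\|f\|_X(1+t)^{-(\frac{3}{2\alpha}-\frac{\epsilon}{2})}$ with exponent strictly larger than $1$ precisely because $\alpha<\frac{3}{2+\epsilon}$, so that $U^{i+1}=\int_0^{\infty}u^{i+1}(t)\,dt$ lies in $L^2$ and solves the linearized steady equation $\tfrac12(U^{i}U^{i+1})_x+\Lambda^{\alpha}U^{i+1}=f$; smallness of $\|f\|_X$ then makes the map $U^{i}\mapsto U^{i+1}$ a contraction in $\dot H^{\alpha/2}$, and the limit is the steady solution. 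Your uniqueness argument, by contrast, is essentially the paper's and is fine.

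The genuine gap is in your existence step, at the passage from the evolutionary solution to the steady state. Weak-$*$ compactness of $u(t_n)$ in $H^{\alpha/2}$ only produces a limit point $U$; nothing in your sketch forces that limit to satisfy \eqref{e14}. When you test the evolution equation on the shifted windows $[t_n,t_n+1]$ with a time-independent $\psi$, the term $\langle u(t_n+1)-u(t_n),\psi\rangle$ and, more importantly, the time dependence of the limit profile do not disappear: Aubin--Lions on $[t_n,t_n+1]$ gives you a limit that solves the \emph{evolution} equation on a unit time interval, and to conclude it is time-independent you would need something like $\int_{t_n}^{t_n+1}\|\Lambda^{\alpha/2}u\|_{L^2}^2\,dt\to 0$ or $\sup_{0\le s\le 1}\|u(t_n+s)-u(t_n)\|_{L^2}\to 0$. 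Neither is available: with a nonzero force the energy inequality gives $\int_0^T\|\Lambda^{\alpha/2}u\|_{L^2}^2\,dt\lesssim \|u_0\|_{L^2}^2+T\|f\|_{\dot H^{-\alpha/2}}^2$, which grows linearly in $T$, so the dissipation is not globally integrable, and proving convergence of the whole trajectory is essentially the stability theorem (Theorem \ref{thm1.5}), which in the paper is proved \emph{after} the steady state is constructed and moreover needs the $L^\infty$ (De Giorgi) bound of Section 5.2. A secondary, more repairable issue: the energy identity gives $\sup_t\|u(t)\|_{L^2}$ plus only a \emph{time-averaged} control of $\|\Lambda^{\alpha/2}u\|_{L^2}$, so the claimed bound $\sup_t\|u(t)\|_{H^{\alpha/2}}\le C\epsilon^{-1}\|f\|_X$ does not follow as stated (one can at best select good times $t_n$ with bounded $\dot H^{\alpha/2}$ norm). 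The missing idea that makes the paper's scheme work is exactly the decay rate faster than $t^{-1}$ for the auxiliary \emph{linear} flows started from $f$, which converts time decay into the spatial $L^2$ bound $\|\int_0^\infty u^{i+1}\,dt\|_{L^2}<\infty$; your Fourier-splitting step aims only at a uniform-in-time bound, which by itself cannot substitute for that mechanism.
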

\begin{rem}
This theorem implies that the weak solution of (\ref{e12}), which belongs to the class with bounded $H^{\frac{\alpha}{2}}$ energy, is unique.
\end{rem}
\begin{thm}\label{thm1.5}
If $f$ satisfy the assumptions in Theorem \ref{thm1.4}, the unique weak solution $U$ of (\ref{e12}) is nonlinearly stable in the following sense:
 let $\theta\in L^{2}(R)$ and $u(t,x)$ be a viscosity weak solution of (\ref{e11}) obtained in Corollary \ref{cor3.4} with initial date $u(0,x)=U+\theta$, then for any $T>0$, there hold
\begin{equation}\label{e1.6}
 u\in L^{\infty}((0,T)\times R)
 \end{equation}
 and
 \begin{equation}\label{e1.7}
 \lim_{t\rightarrow \infty}\|u(t)-U\|_{L^{2}}=0.
 \end{equation}
\end{thm}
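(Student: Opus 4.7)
For the first assertion (1.6), I would invoke the $L^\infty$ boundedness developed in subsection 5.2: since $u$ is a viscosity weak solution of (1.1) with $u_0 = U + \theta \in L^\infty(\R) + L^2(\R)$ and $U \in H^{\alpha/2}(\R) \hookrightarrow L^\infty(\R)$ (because $\alpha/2 > 1/2$), a De Giorgi / Caffarelli--Vasseur-style level-set truncation for the fractional diffusion gives a bound $\|u\|_{L^\infty((0,T)\times\R)} \leq M$ depending only on $\|U\|_{L^\infty}$, $\|\theta\|_{L^2}$, and $\|f\|_X$. This is essential for the stability estimate below because it delivers $\|w\|_{L^\infty} \leq M + \|U\|_{L^\infty}$ for the perturbation $w = u - U$.

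For the decay (1.7), subtract (1.2) from (1.1) to obtain the perturbation equation
\[
w_t + u w_x + w U_x + \Lambda^\alpha w = 0.
\]
Testing against $w$, using $\int u w w_x\, dx = -\tfrac12 \int u_x w^2\, dx$ with $u_x = U_x + w_x$ and $\int w_x w^2\, dx = 0$, one arrives at the basic energy identity
\[
\tfrac12 \tfrac{d}{dt}\|w\|_{L^2}^2 + \|\Lambda^{\alpha/2} w\|_{L^2}^2 = -\tfrac12 \int U_x w^2\, dx.
\]
The right-hand side is to be controlled via the Plancherel duality $|\int U_x w^2\, dx| \leq \|\Lambda^{1-\alpha/2}U\|_{L^2} \|\Lambda^{\alpha/2}(w^2)\|_{L^2}$, the Kato--Ponce bound $\|\Lambda^{\alpha/2}(w^2)\|_{L^2} \leq C\|w\|_{L^\infty}\|\Lambda^{\alpha/2} w\|_{L^2}$, the interpolation $\|\Lambda^{1-\alpha/2}U\|_{L^2} \leq C\|U\|_{H^{\alpha/2}}$ (valid since $1-\alpha/2 < \alpha/2$, i.e.\ $\alpha > 1$), and a Gagliardo--Nirenberg estimate for $\|w\|_{L^\infty}$. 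The smallness of $\|U\|_{H^{\alpha/2}}$ inherited from Theorem 1.4 lets one absorb a fraction of the dissipation on the left, leaving an inequality of the form
\[
\tfrac{d}{dt}\|w\|_{L^2}^2 + \|\Lambda^{\alpha/2} w\|_{L^2}^2 \leq C \|U\|_{H^{\alpha/2}}^{\gamma}\|w\|_{L^2}^2
\]
for some $\gamma > 0$, which by Gronwall secures uniform-in-time $L^2$ boundedness of $w$.

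To upgrade this to the limit (1.7), the Poincar\'e inequality is unavailable on $\R$, so I would apply the Fourier splitting method of Schonbek used in \cite{6,7}. Introducing the time-dependent ball $B(t) = \{|\xi| \leq g(t)\}$ with $g(t) = (K/(t+1))^{1/\alpha}$ and using
\[
\|\Lambda^{\alpha/2} w\|_{L^2}^2 \geq g(t)^\alpha \Big( \|w\|_{L^2}^2 - \int_{B(t)} |\hat w(\xi)|^2 \, d\xi \Big),
\]
one converts the energy inequality into a differential inequality for $(t+1)^K \|w(t)\|_{L^2}^2$ whose right-hand side is governed by the low-frequency mass $\int_{B(t)} |\hat w|^2\, d\xi$. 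Bounding $|\hat w(t,\xi)|$ directly from the Fourier-side equation, using that $\hat f$ is supported away from $\xi = 0$ and that the nonlinear flux has Fourier transform controlled by $\|u\|_{L^\infty}\|w\|_{L^2}$, I expect $\int_{B(t)}|\hat w|^2\, d\xi \lesssim g(t)^\beta$ for a positive $\beta$. Closing the Gronwall inequality then yields algebraic decay of $\|w(t)\|_{L^2}$ and in particular (1.7).

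The main obstacle, in my view, is the nonlinear estimate in the second paragraph: because $\alpha < 3/2$ the dissipation $\Lambda^\alpha$ falls short of one full derivative, so the coupling term $\int U_x w^2\, dx$ cannot be bounded by $\|U\|_{L^\infty}\|w\|_{L^2}^{1+\mu}$-type expressions without losing the dissipation. One is forced to hand off the derivative on $U_x$ via the dual norm, and the fact that this costs exactly $\|w\|_{L^\infty}$ rather than $\|w\|_{L^2}$ is why the $L^\infty$ control of the first step is indispensable, and why the restriction $\alpha \in (1, 3/(2+\epsilon))$ cannot easily be relaxed with this method.
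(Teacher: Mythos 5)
Your treatment of (\ref{e1.6}) matches the paper: the $L^{\infty}$ bound comes from the De Giorgi/Caffarelli--Vasseur level-set iteration of subsection 5.2, and it is indeed what makes the nonlinear terms controllable later. The trouble is in the decay argument. First, the energy inequality you end with, $\frac{d}{dt}\|w\|_{L^{2}}^{2}+\|\Lambda^{\alpha/2}w\|_{L^{2}}^{2}\leq C\|U\|_{H^{\alpha/2}}^{\gamma}\|w\|_{L^{2}}^{2}$, does not ``secure uniform-in-time $L^{2}$ boundedness by Gronwall'': Gronwall gives $\|w(t)\|_{L^{2}}^{2}\leq\|\theta\|_{L^{2}}^{2}e^{Ct}$, and since no Poincar\'e inequality is available on $\R$ the zero-order term cannot be absorbed. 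The paper's estimate is sharper and is the crux: writing $|\int_{\R}(Uw)_{x}w\,dx|\leq\|\Lambda^{1-\alpha/2}(Uw)\|_{L^{2}}\|\Lambda^{\alpha/2}w\|_{L^{2}}$ and distributing derivatives by Kato--Ponce so that \emph{both} occurrences of $w$ land at the level $\dot{H}^{\alpha/2}$ (this is where $\alpha>1$, i.e. $1-\alpha/2<\alpha/2$, enters through the exponents $L^{1/(\alpha-1)}$ and $L^{2/(3-2\alpha)}$), one obtains the bound $2\|U\|_{H^{\alpha/2}}\|\Lambda^{\alpha/2}w\|_{L^{2}}^{2}\leq\frac{2}{3}\|\Lambda^{\alpha/2}w\|_{L^{2}}^{2}$ with no zero-order remainder, hence $\frac{d}{dt}\|w\|_{L^{2}}^{2}+\frac{2}{3}\|\Lambda^{\alpha/2}w\|_{L^{2}}^{2}\leq0$, and after integration both $\sup_{t}\|w(t)\|_{L^{2}}\leq\|\theta\|_{L^{2}}$ and the key fact $\int_{0}^{\infty}\|\Lambda^{\alpha/2}w\|_{L^{2}}^{2}\,dt<\infty$ (this is (\ref{e5.2})). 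Your version loses exactly this space-time integrability, and without it the argument cannot close.

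Second, your Fourier-splitting step leans on the spectral assumption on $\hat{f}$, but $f$ cancels when you subtract (\ref{e12}) from (\ref{e11}): the perturbation equation $w_{t}+ww_{x}+(Uw)_{x}+\Lambda^{\alpha}w=0$ is driven only by the initial datum $\theta$, which is an arbitrary $L^{2}$ function. Consequently the low-frequency mass $\int_{B(t)}|\hat{w}|^{2}d\xi$ is dominated by $\int_{B(t)}|\hat{\theta}|^{2}d\xi$, which tends to $0$ with no algebraic rate, so the claimed bound $\lesssim g(t)^{\beta}$ is not available and no decay rate can be extracted for general $\theta$; the spectral gap of $\hat{f}$ is used in the paper only in Section 4, where the evolution is started from $f$ itself. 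The paper's proof of (\ref{e1.7}) avoids rates entirely: it tests (\ref{e5.1}) against $e^{-2(t-s)\Lambda^{\alpha}}w$, uses the $L^{\infty}$ and $L^{2}$ bounds on $w$ together with the smallness of $U$ to reach $\|w(t)\|_{L^{2}}^{2}\leq\|e^{-(t-s)\Lambda^{\alpha}}w(s)\|_{L^{2}}^{2}+(C+\frac{4}{3})\int_{s}^{t}\|\Lambda^{\alpha/2}w\|_{L^{2}}^{2}d\tau$, lets $t\rightarrow\infty$ (the semigroup term vanishes for fixed $s$ since $w(s)\in L^{2}$), and then $s\rightarrow\infty$, where the remaining term is the tail of the convergent integral from (\ref{e5.2}). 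To repair your proposal you would either have to reproduce this semigroup argument or redo the splitting using only $\int_{B(t)}|\hat{\theta}|^{2}d\xi\rightarrow0$ without a rate; as written, the proposal does not reach (\ref{e1.7}).
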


\section{Preliminary}
In this section, we give some useful lemmas we need in the following. We mention that $u\in H^{s}(R)$ (or $u\in \dot{H}^{s}(R)$) means that the tempered distribution $u$ satisfies $\widehat{u}\in L_{loc}^{2}(R)$
and
\begin{equation*}
\|f\|_{H^{s}}^{2}=\int_{R}(1+|\xi|^{2})^{s}|\hat{u}|^{2}d\xi<\infty ~~ (or~ \|f\|_{H^{s}}^{2}=\int_{R}|\xi|^{2s}|\hat{u}|^{2}d\xi<\infty).
\end{equation*}
\begin{lem}\label{lem2.1}
Let $2\leq p <\infty$ and $\delta=\frac{1}{2}-\frac{1}{p}$, then there exists a constant $C(p)$ such that
\begin{equation}\label{e2.1}
\|f\|_{L^{p}}\leq C(p)\|\Lambda^{\delta}f\|_{L^{2}}.
\end{equation}
\end{lem}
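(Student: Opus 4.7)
The inequality is the one-dimensional Sobolev embedding $\dot{H}^{\delta}(\mathbb{R}) \hookrightarrow L^{p}(\mathbb{R})$ at the sharp scale-invariant exponent $\frac{1}{p} = \frac{1}{2} - \delta$. Since $p \in [2,\infty)$ forces $\delta \in [0,\tfrac{1}{2})$, and the endpoint $p = 2$, $\delta = 0$ is immediate from Plancherel, I may assume $p > 2$ and $\delta \in (0,\tfrac{1}{2})$ throughout.

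The plan is to reduce to the Hardy--Littlewood--Sobolev inequality. Setting $g := \Lambda^{\delta} f \in L^{2}(\mathbb{R})$, one has $f = \Lambda^{-\delta} g$, and on the real line the inverse fractional power is given by the Riesz potential representation
\[
\Lambda^{-\delta} g(x) \;=\; c_{\delta} \int_{\mathbb{R}} \frac{g(y)}{|x-y|^{1-\delta}}\,dy,
\]
which is legitimate precisely because $\delta \in (0,1)$; one checks this by verifying that the Fourier symbol of the right-hand side equals $|\xi|^{-\delta}$ for the explicit constant $c_{\delta}$. The convolution kernel $|x|^{-(1-\delta)}$ belongs to the weak Lebesgue space $L^{1/(1-\delta),\infty}(\mathbb{R})$, so the one-dimensional Hardy--Littlewood--Sobolev inequality yields
\[
\|\Lambda^{-\delta} g\|_{L^{p}} \;\le\; C(p)\,\|g\|_{L^{2}} \qquad \text{precisely when} \qquad \tfrac{1}{p} = \tfrac{1}{2} - \delta,
\]
which is exactly the hypothesis. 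Substituting $g = \Lambda^{\delta} f$ gives (\ref{e2.1}).

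An alternative I would consider is a Littlewood--Paley argument: decompose $f = \sum_{j \in \mathbb{Z}} \dot{\Delta}_{j} f$, apply Bernstein's inequality to obtain $\|\dot{\Delta}_{j} f\|_{L^{p}} \le C\, 2^{j\delta} \|\dot{\Delta}_{j} f\|_{L^{2}}$, and reassemble via the $L^{p}$ square-function characterization for $p \ge 2$. This is slightly heavier machinery than the Riesz potential route, so I would prefer HLS unless the Littlewood--Paley framework were already being invoked elsewhere for another reason.

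There is essentially no technical obstacle here, since this is classical. The only small points to monitor are that the range $\delta \in [0,\tfrac{1}{2})$ keeps both the Riesz potential identity ($0 < \delta < 1$) and the HLS scaling relation ($\tfrac{1}{r} - \tfrac{1}{q} = \delta$ with $r = 2$, $q = p$) simultaneously legitimate, and that the endpoint $p = 2$ be dispatched by Plancherel rather than by HLS, which genuinely requires $\delta > 0$.
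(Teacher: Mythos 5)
Your proof is correct. The paper itself states Lemma \ref{lem2.1} without proof, treating it as the classical homogeneous Sobolev embedding $\dot{H}^{\delta}(\R)\hookrightarrow L^{p}(\R)$, so there is no in-paper argument to compare against; your reduction to the Hardy--Littlewood--Sobolev (weak Young) inequality via the Riesz potential kernel $|x|^{-(1-\delta)}\in L^{1/(1-\delta),\infty}(\R)$, with the $p=2$ endpoint handled by Plancherel, is exactly the standard derivation and the exponent bookkeeping checks out. The only point worth making explicit is that writing $f=\Lambda^{-\delta}\Lambda^{\delta}f$ presupposes that $f$ is a tempered distribution with $\hat{f}\in L^{2}_{loc}$ and no polynomial component, which is precisely how the paper defines membership in $\dot{H}^{s}$, so the identity is legitimate in the setting where the lemma is applied.
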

\begin{lem}\label{lem2.2}
The interpolation lemma
\begin{equation}\label{e2.2}
\|f\|_{L^{p}}\leq C(p)\|f\|_{L^{2}}^{a}\|\Lambda^{\sigma}f\|_{L^{2}}^{1-a}\leq C(\|f\|_{L^{2}}+\|\Lambda^{\sigma}f\|_{L^{2}})
\end{equation}
with $\frac{1}{p}=\frac{a}{2}+(1-a)(\frac{1}{2}-\sigma)$ and $0\leq a \leq 1.$
\end{lem}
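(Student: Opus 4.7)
The plan is to derive the interpolation from two standard ingredients: the Sobolev embedding of Lemma~\ref{lem2.1} and a Hölder inequality applied on the Fourier side to interpolate homogeneous $\dot{H}^{s}$ norms. The chain of reasoning is $L^{p} \hookleftarrow \dot H^{\delta} \hookleftarrow (\dot H^{0})^{a}\cdot(\dot H^{\sigma})^{1-a}$.

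First I set $\delta := \tfrac{1}{2} - \tfrac{1}{p}$. A short computation shows that the scaling relation $\tfrac{1}{p} = \tfrac{a}{2} + (1-a)\bigl(\tfrac{1}{2} - \sigma\bigr)$ reduces to $\delta = (1-a)\sigma$, so that $0 \le \delta \le \sigma$ for all admissible $a \in [0,1]$ (and the admissible regime is $p\in[2,\infty)$, equivalently $\delta \in [0,1/2)$). Lemma~\ref{lem2.1} therefore gives
$$\|f\|_{L^{p}} \le C(p)\,\|\Lambda^{\delta} f\|_{L^{2}},$$
and it suffices to prove the homogeneous Sobolev interpolation
$$\|\Lambda^{\delta} f\|_{L^{2}} \le \|f\|_{L^{2}}^{a}\,\|\Lambda^{\sigma} f\|_{L^{2}}^{1-a}.$$

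For this homogeneous bound I would invoke Plancherel to rewrite the left-hand side as $\int_{R} |\xi|^{2\delta}\,|\hat f(\xi)|^{2}\,d\xi$ and factor the integrand as $|\hat f|^{2a} \cdot \bigl(|\xi|^{2\sigma}|\hat f|^{2}\bigr)^{1-a}$, which is legitimate because $2\delta = 2(1-a)\sigma$. Hölder's inequality with conjugate exponents $1/a$ and $1/(1-a)$ (the boundary cases $a=0,1$ being immediate) then yields
$$\int_{R} |\xi|^{2\delta} |\hat f|^{2} \,d\xi \le \Bigl(\int_{R}|\hat f|^{2}\,d\xi\Bigr)^{a}\Bigl(\int_{R}|\xi|^{2\sigma}|\hat f|^{2}\,d\xi\Bigr)^{1-a} = \|f\|_{L^{2}}^{2a}\,\|\Lambda^{\sigma} f\|_{L^{2}}^{2(1-a)}.$$
Taking square roots and combining with the Sobolev embedding above produces the first inequality of \eqref{e2.2}.

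The second inequality is an instance of the weighted arithmetic-geometric mean inequality $x^{a} y^{1-a} \le a x + (1-a) y \le x + y$, applied with $x = \|f\|_{L^{2}}$ and $y = \|\Lambda^{\sigma} f\|_{L^{2}}$, and absorbing $\max(a,1-a) \le 1$ into the constant $C$. There is no substantive obstacle in this proof; the only bookkeeping worth emphasizing is the algebraic identity $\tfrac{1}{2} - \tfrac{1}{p} = (1-a)\sigma$, which is what makes the Fourier-side Hölder split scale-compatible and allows Lemma~\ref{lem2.1} to be invoked at the exact exponent $\delta$.
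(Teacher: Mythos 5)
Your proof is correct. The paper states Lemma \ref{lem2.2} in the preliminaries without any proof, treating it as a standard Gagliardo--Nirenberg/Sobolev interpolation fact, so there is no argument in the paper to compare against; your derivation --- reducing to $\|\Lambda^{\delta}f\|_{L^{2}}\le\|f\|_{L^{2}}^{a}\|\Lambda^{\sigma}f\|_{L^{2}}^{1-a}$ with $\delta=\tfrac12-\tfrac1p=(1-a)\sigma$ via Lemma \ref{lem2.1}, and proving that bound by Plancherel plus H\"older on the Fourier side --- is exactly the canonical argument, and your handling of the boundary cases $a=0,1$ and of the admissibility constraint $\delta\in[0,\tfrac12)$ is sound.
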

\begin{lem}(see \cite{21})\label{lem2.3}
The commutator estimates
\begin{equation}\label{e2.3}
\|\Lambda^{s}(fg)\|_{L^{p}}\leq C(\|\Lambda^{s}f\|_{L^{p_{1}}}\|g\|_{L^{q_{1}}}+\|f\|_{L^{p_{2}}}\|\Lambda^{s}g\|_{L^{q_{2}}}),
\end{equation}
for $s>0$ and $\frac{1}{p}=\frac{1}{p_{1}}+\frac{1}{q_{1}}=\frac{1}{p_{2}}+\frac{1}{q_{2}}$.
\end{lem}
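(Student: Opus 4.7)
The plan is to establish the fractional Leibniz rule through Littlewood--Paley analysis and Bony's paraproduct decomposition. Introducing the homogeneous dyadic blocks $\ddj$ and the low-frequency cutoffs $S_{j}=\sum_{k\le j-1}\dot\Delta_{k}$, I would split
\[
fg \;=\; T_{f}g+T_{g}f+R(f,g),\qquad T_{f}g=\sum_{j\in\Z}S_{j-1}f\cdot\ddj g,
\]
with $T_{g}f$ defined symmetrically and $R(f,g)=\sum_{j\in\Z}\sum_{|k-j|\le 1}\ddj f\cdot\dot\Delta_{k}g$. Since $\La^{s}$ and each $\ddj$ are Fourier multipliers, it suffices to bound $\|\La^{s}T_{f}g\|_{L^{p}}$, $\|\La^{s}T_{g}f\|_{L^{p}}$ and $\|\La^{s}R(f,g)\|_{L^{p}}$ separately.

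For $T_{f}g$, each summand $S_{j-1}f\cdot\ddj g$ has Fourier support in a dyadic annulus of radius $\sim 2^{j}$, so $\La^{s}$ acts on it essentially as the scalar $2^{js}$, which can be absorbed into $\ddj\La^{s}g$. Passing to the Littlewood--Paley square function, applying H\"older's inequality, and dominating $|S_{j-1}f|\le CMf$ by the Hardy--Littlewood maximal function, I would invoke the Fefferman--Stein vector-valued maximal inequality (valid since $1<p,p_{2},q_{2}<\infty$) to conclude
\[
\|\La^{s}T_{f}g\|_{L^{p}}\le C\,\|f\|_{L^{p_{2}}}\|\La^{s}g\|_{L^{q_{2}}}.
\]
By symmetry, $\|\La^{s}T_{g}f\|_{L^{p}}\le C\,\|\La^{s}f\|_{L^{p_{1}}}\|g\|_{L^{q_{1}}}$.

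The remainder $R(f,g)$ is the crux. Here $\ddj f\cdot\dot\Delta_{k}g$ with $|k-j|\le 1$ has spectrum only in a ball of radius $\lesssim 2^{j}$, not in an annulus, so one cannot freely pull $2^{js}$ through the operator. I would fix an output block $\dot\Delta_{n}$, keep only the terms with $j\gtrsim n$ that can contribute, estimate each piece by H\"older's inequality and convert $2^{js}\|\ddj f\|_{L^{p_{1}}}$ into $\|\ddj\La^{s}f\|_{L^{p_{1}}}$. This reduces the estimate to a geometric sum of the form $\sum_{j\ge n}2^{-(j-n)s}$, whose convergence is exactly what the hypothesis $s>0$ delivers. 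Summing over $n$ via Minkowski and the square-function characterization of $L^{p}$ then produces
\[
\|\La^{s}R(f,g)\|_{L^{p}}\le C\bigl(\|\La^{s}f\|_{L^{p_{1}}}\|g\|_{L^{q_{1}}}+\|f\|_{L^{p_{2}}}\|\La^{s}g\|_{L^{q_{2}}}\bigr).
\]

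The main obstacle is this remainder term: the two paraproducts are essentially free once the annular localisation and Fefferman--Stein inequality are invoked, whereas $R(f,g)$ forces one to exploit $s>0$ quantitatively and is precisely the reason the inequality fails at $s=0$. A secondary subtlety is the range of H\"older indices; cases touching $1$ or $\infty$ generally demand Hardy-space or \textrm{BMO} substitutes, but only exponents in $(1,\infty)$ are used in the applications of Lemma~\ref{lem2.3} later in this paper, and for those exponents the paraproduct argument sketched above, carried out in detail in \cite{21}, applies directly.
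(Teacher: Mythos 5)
The paper offers no proof of this lemma at all: it is quoted verbatim from Kato--Ponce \cite{21} (and, despite the name ``commutator estimates,'' what is actually stated and used later is the fractional product/Leibniz rule rather than a genuine commutator bound). Your paraproduct sketch is a correct and standard route to it: the Bony splitting $fg=T_{f}g+T_{g}f+R(f,g)$, the annular localisation of the paraproduct pieces combined with the Fefferman--Stein vector-valued maximal inequality, and the use of $s>0$ to sum the geometric series coming from the ball-supported remainder are exactly the ingredients of the modern proof (as in Grafakos--Oh or the Coifman--Meyer multiplier approach), and you correctly identify the remainder term as the only place where $s>0$ enters and the only genuinely delicate step. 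Two small caveats: the argument as sketched requires all of $p,p_{1},q_{1},p_{2},q_{2}$ to lie in $(1,\infty)$ (endpoints such as $L^{\infty}$ or $L^{1}$ need BMO/Hardy-space substitutes), and in this paper Lemma~\ref{lem2.3} is in fact applied with exponents like $q_{1}=\infty$ and $p_{1}=\frac{1}{\alpha-1}$, so if one wanted a self-contained proof covering the paper's actual usage one would have to supply the $L^{\infty}$-endpoint version explicitly; also, the original proof in \cite{21} proceeds differently (via the Coifman--Meyer theorem and Leibniz-type commutator identities), so your attribution of the detailed paraproduct argument to that reference is slightly off, though mathematically immaterial.
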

\begin{lem}(Aubin-Lions lemma in \cite{10})\label{lem2.4}
 Let $X\subset Y\subset Z$ be reflexive Banach spaces with the imbedding $X\subset Y$ being compact. Assume that \\
 (1) For any $\delta>0$, there is $C_{\delta}>0$ such that
 $$
\|x\|_{Y}\leq\delta\|x\|_{X}+C_{\delta}\|x\|_{Z},~~~ \forall x\in X.
 $$
 (2) If $0<T<\infty$, $p\geq1, q>1$ and $g_{j}\in L^{p}(0,T;X), j\in N$, satisfy
 $$
 \int^{T}_{0}\|g_{j}(t)\|_{X}^{p}dt+\int^{T}_{0}\|\frac{d}{dt}g_{j}(t)\|_{Z}^{q}dt\leq C(T)
 $$
 for some constant $C(T)<\infty$, then $\{g_{j}\}_{j}$ is relatively compact in $L^{p}(0,T;Y)$.
 \end{lem}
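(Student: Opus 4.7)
The plan is to deduce the classical Aubin--Lions compactness theorem by combining the Ehrling-type interpolation in hypothesis (1) with a time-equicontinuity estimate coming from the uniform bound on the time derivative. The strategy is to convert ``pointwise in $t$'' compactness in $Y$ (furnished by the compact embedding $X \subset Y$) into genuine $L^p$-compactness on the time-space cylinder via a Fr\'echet--Kolmogorov type criterion.

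First, by the reflexivity of $L^p(0,T;X)$ and the uniform hypothesis, extract a subsequence (still denoted $g_j$) that converges weakly in $L^p(0,T;X)$ to some limit $g$; after translating $g_j \mapsto g_j - g$, one may assume $g=0$. The task reduces to proving $g_j \to 0$ strongly in $L^p(0,T;Y)$. Applying (1) pointwise in $t$ and then taking the $L^p$-norm in time yields, for every $\delta>0$,
$$
\|g_j\|_{L^p(0,T;Y)} \leq \delta \|g_j\|_{L^p(0,T;X)} + C_\delta \|g_j\|_{L^p(0,T;Z)} \leq \delta \, C(T)^{1/p} + C_\delta \|g_j\|_{L^p(0,T;Z)}.
$$
Since $\delta$ is arbitrary, it therefore suffices to show $g_j \to 0$ strongly in $L^p(0,T;Z)$.

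To obtain this, the uniform bound on $\frac{d}{dt}g_j$ in $L^q(0,T;Z)$ combined with the fundamental theorem of calculus gives, for $0<h<T$,
$$
\|g_j(\cdot+h) - g_j(\cdot)\|_{L^p(0,T-h;Z)} \leq C h^{1-1/q} \|\tfrac{d}{dt}g_j\|_{L^q(0,T;Z)} \leq C(T)\, h^{1-1/q},
$$
which is uniform-in-$j$ time-equicontinuity in $Z$. Pairing this with the time mollification $G_j^h(t) := \tfrac{1}{h}\int_t^{t+h} g_j(s)\,ds$, one verifies (a) $\|g_j - G_j^h\|_{L^p(0,T-h;Z)} \to 0$ as $h \to 0$ uniformly in $j$, and (b) for each fixed $h$, $\{G_j^h\}_j$ is pointwise bounded in $X$ (with bound $\leq h^{-1/p}\|g_j\|_{L^p(0,T;X)}$) and equicontinuous in $Z$ on $[0,T-h]$; hence, by the compact embedding $X \subset Y \subset Z$ and Ascoli--Arzel\`a, $\{G_j^h\}_j$ is relatively compact in $C([0,T-h];Z)$. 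Because $g_j \rightharpoonup 0$ in $L^p(0,T;X)$ forces $G_j^h \rightharpoonup 0$ in $Z$ for each $t$, every cluster point must vanish, so $G_j^h \to 0$ in $L^p(0,T-h;Z)$ for each fixed $h$. A diagonal argument (first $j \to \infty$ with $h$ fixed, then $h \to 0$) yields $g_j \to 0$ in $L^p(0,T;Z)$, closing the chain back to $L^p(0,T;Y)$.

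The main obstacle I anticipate is the scale mismatch: the temporal regularity of $g_j$ is only controlled in the weakest space $Z$, while compactness is required in the intermediate space $Y$, and the $X$-bound is merely integrable in $t$ rather than uniform. The mollification step is precisely what reconciles these, trading a small time-averaging error in $Z$ for improved pointwise-in-$t$ $X$-regularity so that Ascoli--Arzel\`a and the compact embedding $X \subset Y$ can be simultaneously exploited. A secondary technical point is the care needed in the diagonal passage $j \to \infty$ then $h \to 0$, since the bound on $\|G_j^h\|_X$ blows up as $h \to 0$; this is handled by the quantitative approximation estimate in (a), which tends to zero uniformly in $j$ independently of how large the $X$-norm of $G_j^h$ becomes.
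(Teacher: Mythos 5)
The paper offers no proof of this lemma: it is quoted verbatim as a known result of Simon \cite{10}, so there is nothing internal to compare against. Your argument is essentially the standard proof of the Aubin--Lions--Simon theorem (Ehrling reduction from $Y$ to $Z$, translation estimate $\|g_j(\cdot+h)-g_j\|_{L^p(0,T-h;Z)}\lesssim h^{1-1/q}$ from the derivative bound, Steklov averaging, Ascoli--Arzel\`a in $C([0,T-h];Z)$ using pointwise $X$-boundedness of the averages, then the diagonal limit), and each of these steps checks out. Two technical points deserve attention. First, your opening move --- extracting a weakly convergent subsequence in $L^p(0,T;X)$ and translating so that the limit is zero --- uses reflexivity of $L^p(0,T;X)$ and therefore fails at the endpoint $p=1$, which the statement permits; the standard remedy is to skip the weak limit entirely and prove total boundedness directly (a finite $\eta$-net of $\{G_j^h\}_j$ in $L^p(0,T;Z)$ transfers to a finite net of $\{g_j\}_j$ in $L^p(0,T;Y)$ via the Ehrling inequality applied to differences), which works for all $p\geq 1$. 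Second, your estimates live on $(0,T-h)$ while the conclusion concerns $(0,T)$; the leftover strip $(T-h,T)$ must be controlled, e.g.\ by observing that the translation estimate together with the $L^1(0,T;Z)$ bound yields a uniform $L^\infty(0,T;Z)$ bound on $g_j$, so that $\int_{T-h}^{T}\|g_j\|_Z^p\,dt\to 0$ uniformly in $j$ as $h\to 0$. Neither issue affects the paper's application, where $p=q=2$.
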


\section{The existence of viscosity weak solution to forced Burgers equation}
This section is devoted to establish the existence of viscosity weak solution to forced Burgers equation. Let's first recall the definition of the weak solution and viscosity weak solution.
 \begin{defn}\label{def3.1}
 A weak solution to (\ref{e11}) is a function $u\in C_{w}([0, T]; L^{2}(R))$ satisfying, for any $\psi \in C_{0}^{\infty}(R\times(0,T))$,
\begin{equation}
\begin{aligned}
~&-\int^{T}_{0}\langle u, \psi_{t}\rangle dt-\frac{1}{2}\int^{T}_{0}\langle(u)^{2},\psi_{x}\rangle dt+\int^{T}_{0}\langle\Lambda^{\frac{\alpha}{2}}u,
\Lambda^{\frac{\alpha}{2}}\psi\rangle dt \nonumber\\
&=\langle u_{0},\psi(x,0)\rangle+\int^{T}_{0}\langle f,\psi\rangle dt
 \end{aligned}
\end{equation}
\end{defn}
\begin{defn}\label{def3.2}
A weak solution of (\ref{e11})
will be called a viscosity weak solution with initial data $u_{0}\in L^{2}(R)$, if it is the weak limit of a sequence of solutions of the problem
\begin{equation}\label{e30}
u^{\varepsilon}_{t}+u^{\varepsilon}u^{\varepsilon}_{x}+\Lambda^{\alpha}u^{\varepsilon}=f(x)+\varepsilon u^{\varepsilon}_{xx},\quad as~~\varepsilon\rightarrow 0,
\end{equation}
with $u^{\varepsilon}(x,0)=u_{0}$.
\end{defn}
\begin{lem}\label{lem3.3}
If $u_{0}\in L^{2}(R)$ and $f\in \dot{H}^{-\frac{\alpha}{2}}(R)\cap L^{2}(R)$, then there exists a global weak solution $u^{\varepsilon}\in L^{\infty}([0,T),L^{2}(R))\cap L^{2}([0,T),\dot{H}^{1}(R))$ to (\ref{e30}), which satisfies for any given $T>0$,
\begin{equation}
\begin{aligned}
~&-\int^{T}_{0}\langle u^{\varepsilon}, \psi_{t}\rangle dt-\frac{1}{2}\int^{T}_{0}\langle(u^{\varepsilon})^{2},\psi_{x}\rangle dt+\int^{T}_{0}\langle\Lambda^{\frac{\alpha}{2}}u^{\varepsilon},
\Lambda^{\frac{\alpha}{2}}\psi\rangle dt \nonumber\\
&=\langle u_{0},\psi(x,0)\rangle+\int^{T}_{0}\langle f,\psi\rangle dt-\varepsilon\int^{T}_{0}\langle u^{\varepsilon}_{x},\psi_{x}\rangle dt,~~for~~\psi\in C_{c}^{\infty}([0,T)\times R).
 \end{aligned}
\end{equation}
Moreover, there holds
$$
\sup_{t\geq 0}\|u^{\varepsilon}(t)\|_{L^{2}}^{2}+\int^{T}_{0}\|\Lambda^{\frac{\alpha}{2}} u^{\varepsilon}(t)\|_{L^{2}}^{2}dt+\varepsilon\int^{T}_{0}\|u^{\varepsilon}_{x}(t)\|_{L^{2}}^{2}dt\leq \widetilde{C}(T),
$$
where $\widetilde{C}(T)=\|u_{0}\|_{L^{2}}^{2}+\frac{T}{2}\|\Lambda^{-\frac{\alpha}{2}}f\|_{L^{2}}^{2}$.
\end{lem}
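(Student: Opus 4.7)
The plan is to apply a standard Galerkin / Fourier-truncation scheme to the parabolic regularisation (\ref{e30}), derive a single a priori energy estimate uniform in the approximation parameter, and then pass to the limit using Aubin-Lions (Lemma~\ref{lem2.4}).

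First I would introduce approximations $u_{N}$ by projecting (\ref{e30}) onto frequencies $|\xi|\leq N$: with $P_{N}$ the corresponding Fourier cutoff, one solves
$$
\partial_{t}u_{N}+P_{N}(u_{N}\partial_{x}u_{N})+\Lambda^{\alpha}u_{N}=P_{N}f+\varepsilon \partial_{xx}u_{N},\qquad u_{N}(0)=P_{N}u_{0},
$$
which is a globally well-posed ODE in the closed subspace $P_{N}L^{2}(R)$. Testing with $u_{N}$, the nonlinear term vanishes because $P_{N}$ is self-adjoint and $P_{N}u_{N}=u_{N}$, so $\langle P_{N}(u_{N}\partial_{x}u_{N}),u_{N}\rangle=\tfrac{1}{3}\int_{R}\partial_{x}(u_{N}^{3})\,dx=0$. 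Bounding the forcing by duality, $|\langle f,u_{N}\rangle|\leq \|\Lambda^{-\alpha/2}f\|_{L^{2}}^{2}+\tfrac{1}{4}\|\Lambda^{\alpha/2}u_{N}\|_{L^{2}}^{2}$, absorbing the second term into the dissipation, and integrating in time yields precisely the bound $\widetilde{C}(T)$ claimed in the lemma, uniformly in $N$.

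Next I would pass to the limit $N\to\infty$. The uniform estimate places $u_{N}$ in $L^{\infty}(0,T;L^{2})\cap L^{2}(0,T;H^{\alpha/2})$, and the $\varepsilon$-viscosity gives the extra piece $u_{N}\in L^{2}(0,T;H^{1})$. Using the equation one bounds $\partial_{t}u_{N}$ in some $L^{q}(0,T;H^{-s}(R))$: the linear terms are immediate and the nonlinearity $\tfrac{1}{2}\partial_{x}(u_{N}^{2})$ is controlled using Lemma~\ref{lem2.1} to convert the $H^{\alpha/2}$ bound into an $L^{p}$ bound on $u_{N}^{2}$. Lemma~\ref{lem2.4} then provides strong convergence of a subsequence in $L^{2}(0,T;L^{2}_{loc})$; this is enough to pass to the limit in $\langle u_{N}^{2},\psi_{x}\rangle$ for compactly supported $\psi$, while all other terms pass by the weak convergence already in hand. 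Weak-$\ast$ lower semicontinuity then transfers the energy bound from $u_{N}$ to the candidate limit $u^{\varepsilon}$.

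The main obstacle is the non-compactness of $H^{\alpha/2}(R)\hookrightarrow L^{2}(R)$ on the whole line, which prevents a direct global application of Aubin-Lions. The workaround is to localise: for weak-formulation test functions supported in $[-R,R]$ one only needs strong convergence in $L^{2}([-R,R])$, where the embedding is compact, so Aubin-Lions applies on each ball and a diagonal argument delivers $L^{2}(0,T;L^{2}_{loc})$ convergence, which is precisely what the quadratic nonlinearity requires. Once this local strong convergence is in place, the remainder of the passage to the limit, and the verification of the weak formulation, is routine.
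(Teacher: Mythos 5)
Your proposal is correct and follows essentially the same route as the paper: a Galerkin approximation, the uniform energy estimate obtained by testing with the approximate solution and estimating the force in $\dot{H}^{-\frac{\alpha}{2}}$, a bound on the time derivative of the approximations in a negative Sobolev space, and Aubin--Lions compactness localized to bounded intervals $(-l,l)$ to handle the quadratic nonlinearity, with weak lower semicontinuity giving the energy bound for the limit. The only difference---your Fourier truncation $P_{N}$ in place of the paper's expansion in an orthonormal basis of $H^{1}(R)$---is an immaterial (on the whole line arguably cleaner) variant of the same Galerkin scheme.
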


\begin{proof}
We mainly use Galerkin method to project the equation (\ref{e30}) into a finite dimensional functional space to find a global approximated solution. The Aubin-Lions lemma is applied to conclude the compactness, which would ensure the approximated solutions converge to a weak solution to (\ref{e30}). In the following, we only show an energy estimate in (\ref{e3.3}) and a key compact result in (\ref{e3.4}), and other parts are standard arguments.

Firstly let's consider the following Galerkin approximation equation
\begin{equation}\label{e3.1}
\left\{ \begin{array}{ll}
\partial_{t}u^{\varepsilon}_{n}+u^{\varepsilon}_{n}\partial_{x}u^{\varepsilon}_{n}+\Lambda^{\alpha}u^{\varepsilon}_{n}=f(x)+\varepsilon \partial_{xx}u^{\varepsilon}_{n},  \\
u^{\varepsilon}_{n}(x,0)=u_{0}.
 \end{array} \right.
\end{equation}
Assume that the solutions of (\ref{e3.1}) can be written as
$$
u^{\varepsilon}_{n}(x,t)=\sum_{j=1}^{n}c^{n}_{j}(t)e_{j}(x),
$$
where $\{e_{j}\}_{j=1}$ are a family of orthonormal basis of $H^{1}(R)$. Taking the inner product on both sides of (\ref{e3.1}) in $L^{2}(R)$ with $e_{k}$ for $k=1,2,3,\cdot \cdot \cdot,n$, we can obtain
\begin{equation}
\begin{aligned}
~&\sum_{j=1}^{n}\langle\partial_{t}c^{n}_{j}(t)e_{j},e_{k}\rangle+\sum_{i,j=1}^{n}\langle c^{n}_{j}(t)e_{j}c^{n}_{i}(t)\partial_{x}e_{i},e_{k}\rangle+\sum_{j=1}^{n}\langle c^{n}_{j}(t)\Lambda^{\alpha} e_{j},e_{k}\rangle \nonumber\\
&=\langle f(x),e_{k}\rangle+\varepsilon\sum_{j=1}^{n}\langle\partial_{t}c^{n}_{j}(t)\partial_{xx}e_{j},e_{k}\rangle.
 \end{aligned}
\end{equation}
Since $\{e_{j}(x)\}$ are also orthonormal in $L^{2}(R)$, it follows that
\begin{equation}\label{e3.2}
\partial_{t}c^{n}_{k}(t)=-\lambda_{k}^{\frac{\alpha}{2}}c^{n}_{k}(t)+\varepsilon\lambda_{k}c^{n}_{k}(t)+\langle F(t,x),e_{k}\rangle,~~for~~k=1,2,...,n,
\end{equation}
where $F(t,x)=f(x)-\sum_{i,j=1}^{n}c^{n}_{j}(t)e_{j}c^{n}_{i}(t)\partial_{x}e_{i}$ and $\lambda_{k}$ is the $k$th eigenvalue of operator $-\partial_{xx}$. It's easy to see that the right hand of (\ref{e3.2}) is locally Lipschitz on $c^{n}(t)=(c^{n}_{1}(t),c^{n}_{2}(t),...,c^{n}_{n}(t))^{T}$, then the classical theory of ODEs yields the existence and uniqueness of solutions $\{c^{n}_{k}(t)\}_{k=1,2,...,n}$ for $t\in[0,T_{n})$, hence $u^{\varepsilon}_{n}(x,t)$ for $t\in[0,T_{n})$.

Now we show this solution $u^{\varepsilon}_{n}(x,t)$ is global in time by establishing the following uniform estimate (\ref{e3.3}). Taking $L^{2}$ inner product on (\ref{e3.1}) with $u^{\varepsilon}_{n}(x,t)$, we obtain
$$
\frac{1}{2}\frac{d}{dt}\|u^{\varepsilon}_{n}\|_{L^{2}}^{2}+\|\Lambda^{\frac{\alpha}{2}}u^{\varepsilon}_{n}\|_{L^{2}}^{2}+
\varepsilon\|\partial_{x}u^{\varepsilon}_{n}\|_{L^{2}}^{2}\leq\frac{1}{2}\|\Lambda^{\frac{\alpha}{2}}u^{\varepsilon}_{n}\|_{L^{2}}^{2}+2
\|\Lambda^{-\frac{\alpha}{2}}f\|_{L^{2}}^{2},
$$
that is to say,
$$
\frac{d}{dt}\|u^{\varepsilon}_{n}\|_{L^{2}}^{2}+\|\Lambda^{\frac{\alpha}{2}}u^{\varepsilon}_{n}\|_{L^{2}}^{2}+
2\varepsilon\|\partial_{x}u^{\varepsilon}_{n}\|_{L^{2}}^{2}\leq4\|\Lambda^{-\frac{\alpha}{2}}f\|_{L^{2}}^{2}.
$$
For any given $T$, integrating the above equality over $(0,T)$ gives
 \begin{equation}\label{e3.3}
 \sup_{t\geq0}\|u^{\varepsilon}_{n}\|_{L^{2}}^{2}+\int^{T}_{0}\|\Lambda^{\frac{\alpha}{2}}u^{\varepsilon}_{n}\|_{L^{2}}^{2}dt+2\varepsilon\int^{T}_{0}
 \|\partial_{x}u^{\varepsilon}_{n}\|_{L^{2}}^{2}dt\leq \widetilde{C}(T),
 \end{equation}
 where $\widetilde{C}(T)=\|u_{0}\|_{L^{2}}^{2}+4T\|f\|_{\dot{H}^{-\frac{\alpha}{2}}}^{2}$.

 To finish the proof, it's necessary to deduce the following strong convergence
  \begin{equation}\label{e3.4}
u^{\varepsilon}_{n}\rightarrow u^{\varepsilon} ~~in ~~L^{2}([0,T),L^{2}(-l,l))
 \end{equation}
 for any constant $l>0$, which will be used to deal with the convergence of nonlinear term. Here we use the Lemma \ref{lem2.4} by choosing $X=H^{1}(-l,l)$, $Y=L^{2}(-l,l)$ and $Z=H^{-1}(-l,l)$ and let $p=q=2$. Based on the estimate (\ref{e3.3}), it's sufficient to estimate $\|\partial_{t}u^{\varepsilon}_{n}\|_{L^{2}(0,T;H^{-1}(-l,l))}$. Multiplying (\ref{e3.1}) by $\phi\in H^{1}$ and integrating on the real line give
\begin{align}\label{e3.5}
 |\langle \partial_{t}u^{\varepsilon}_{n},\phi \rangle|
 &\leq|\langle u^{\varepsilon}_{n}\partial_{x} u^{\varepsilon}_{n},\phi \rangle|+|\langle \Lambda^{\frac{\alpha}{2}}u^{\varepsilon}_{n},\Lambda^{\frac{\alpha}{2}}\phi\rangle|+\varepsilon|\langle \partial_{x}u^{\varepsilon}_{n},\phi_{x} \rangle|+|\langle f,\phi\rangle| \nonumber\\
 &\leq \|u^{\varepsilon}_{n}\|_{L^{2}}\|\partial_{x}u^{\varepsilon}_{n}\|_{L^{2}}\|\phi\|_{L^{\infty}}+
 \|\Lambda^{\frac{\alpha}{2}}u^{\varepsilon}_{n}\|_{L^{2}}\|\Lambda^{\frac{\alpha}{2}}\phi\|_{L^{2}}+
 \varepsilon\|\partial_{x}u^{\varepsilon}_{n}\|_{L^{2}}\|\phi_{x}\|_{L^{2}} \nonumber\\
 &+\|f\|_{L^{2}}\|\phi\|_{L^{2}}  \nonumber\\
 &\leq \|\phi\|_{H^{1}}(\|u^{\varepsilon}_{n}\|_{L^{2}}\|u^{\varepsilon}_{n}\|_{H^{1}}+
 \|u^{\varepsilon}_{n}\|_{H^{\frac{\alpha}{2}}}+
 \varepsilon\|u^{\varepsilon}_{n}\|_{H^{1}}+\|f\|_{L^{2}}).
 \end{align}
From (\ref{e3.3}) and (\ref{e3.5}), we have
 \begin{align}\label{e3.6}
\int^{T}_{0}\|\partial_{t}u^{\varepsilon}_{n}\|_{H^{-1}}^{2}dt
 &\leq c\int^{T}_{0} (\|u^{\varepsilon}_{n}\|_{L^{2}}^{2}\|u^{\varepsilon}_{n}\|_{H^{1}}^{2}+
 \|u^{\varepsilon}_{n}\|_{H^{\frac{\alpha}{2}}}^{2}+\varepsilon\|u^{\varepsilon}_{n}\|_{H^{1}}^{2}+\|f\|_{L^{2}}^{2})dt \nonumber\\
 &\leq c (\sup_{t\geq0}\|u^{\varepsilon}_{n}\|_{L^{2}}^{2}+\varepsilon)\int^{T}_{0}\|u^{\varepsilon}_{n}\|_{H^{1}}^{2}dt+
 c\int^{T}_{0}\|u^{\varepsilon}_{n}\|_{H^{\frac{\alpha}{2}}}^{2}dt+c \|f\|_{L^{2}}^{2}T \nonumber\\
 &\leq C(c,\widetilde{C}(T)).
 \end{align}
Thus the strong convergence of $\{u^{\varepsilon}_{n}\}$ in (\ref{e3.4}) follows from Lemma \ref{lem2.4}.
\end{proof}

It's easy to prove the existence of the viscosity weak solution for (\ref{e11}) by taking the limit in Lemma \ref{lem3.3}, which can be stated as follows.
\begin{cor}\label{cor3.4}
If $u_{0}\in L^{2}(R)$ and $f\in \dot{H}^{-\frac{\alpha}{2}}(R)\cap L^{2}(R)$, then there exists a global viscosity weak solution $u\in L^{\infty}([0,T),L^{2}(R))\cap L^{2}([0,T),\dot{H}^{\frac{\alpha}{2}}(R))$ to (\ref{e11}), which satisfies for any given $T>0$,
\begin{equation}
\begin{aligned}
~&-\int^{T}_{0}\langle u, \psi_{t}\rangle dt-\frac{1}{2}\int^{T}_{0}\langle u^{2},\psi_{x}\rangle dt+\int^{T}_{0}\langle\Lambda^{\frac{\alpha}{2}}u,
\Lambda^{\frac{\alpha}{2}}\psi\rangle dt \nonumber\\
&=\langle u_{0},\psi(x,0)\rangle+\int^{T}_{0}\langle f,\psi\rangle dt, \quad for  ~~ \psi\in C_{c}^{\infty}([0,T)\times R).
 \end{aligned}
\end{equation}
Moreover, the estimate
$$
\sup_{t\geq 0}\|u(t)\|_{L^{2}}^{2}+\int^{T}_{0}\|\Lambda^{\frac{\alpha}{2}} u(t)\|_{L^{2}}^{2}dt\leq \|u_{0}\|_{L^{2}}^{2}+4T\|\Lambda^{-\frac{\alpha}{2}}f\|_{L^{2}}^{2}
$$
holds.
\end{cor}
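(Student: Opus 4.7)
The plan is to pass to the limit $\varepsilon\to 0$ in the family of viscous solutions $\{u^{\varepsilon}\}$ produced by Lemma~\ref{lem3.3}. Since the bound
$$
\sup_{t\geq 0}\|u^{\varepsilon}(t)\|_{L^{2}}^{2}+\int_{0}^{T}\|\Lambda^{\frac{\alpha}{2}}u^{\varepsilon}(t)\|_{L^{2}}^{2}dt+\varepsilon\int_{0}^{T}\|u^{\varepsilon}_{x}(t)\|_{L^{2}}^{2}dt\leq \widetilde{C}(T)
$$
is independent of $\varepsilon$, a standard diagonal extraction produces a subsequence (not relabeled) with $u^{\varepsilon}\rightharpoonup u$ weakly-$\ast$ in $L^{\infty}([0,T);L^{2}(R))$ and weakly in $L^{2}([0,T);\dot{H}^{\frac{\alpha}{2}}(R))$, and the stated energy estimate for $u$ then follows from lower semicontinuity of norms under weak convergence.

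The main obstacle, exactly as in the proof of Lemma~\ref{lem3.3}, is the nonlinear term $\tfrac{1}{2}\langle (u^{\varepsilon})^{2},\psi_{x}\rangle$, which demands strong convergence of $u^{\varepsilon}$ in $L^{2}([0,T);L^{2}(-l,l))$ for every $l>0$. I would invoke Aubin--Lions (Lemma~\ref{lem2.4}) with $X=H^{\frac{\alpha}{2}}(-l,l)$, $Y=L^{2}(-l,l)$ and $Z=H^{-s}(-l,l)$ for some sufficiently large $s>1$. Testing the viscous equation~(\ref{e30}) against $\phi\in H^{s}$, the awkward contribution $\varepsilon\langle u^{\varepsilon}_{x},\phi_{x}\rangle$ is controlled by $\sqrt{\varepsilon}\cdot\sqrt{\varepsilon}\|u^{\varepsilon}_{x}\|_{L^{2}}\|\phi\|_{H^{s}}$, which integrates in time to a uniform bound because $\varepsilon\int_{0}^{T}\|u^{\varepsilon}_{x}\|_{L^{2}}^{2}dt\leq\widetilde{C}(T)$; the nonlinearity is handled by duality, $\|u^{\varepsilon}u^{\varepsilon}_{x}\|_{H^{-s}}\lesssim\|(u^{\varepsilon})^{2}\|_{L^{2}}\lesssim\|u^{\varepsilon}\|_{L^{2}}\|u^{\varepsilon}\|_{H^{\frac{\alpha}{2}}}$ via Lemma~\ref{lem2.2}; the remaining terms are immediate. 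Together these estimates yield a uniform bound on $\partial_{t}u^{\varepsilon}$ in $L^{2}(0,T;Z)$, so Lemma~\ref{lem2.4} produces $u^{\varepsilon}\to u$ strongly in $L^{2}([0,T);L^{2}(-l,l))$.

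With these convergences in hand, I pass to the limit in the weak formulation of Lemma~\ref{lem3.3}. The linear terms converge by weak convergence against a compactly supported test function $\psi\in C_{c}^{\infty}([0,T)\times R)$; the nonlinear term converges because $(u^{\varepsilon})^{2}\to u^{2}$ in $L^{1}$ on the support of $\psi$, using strong local convergence together with the uniform $L^{\infty}_{t}L^{2}_{x}$ bound; and the viscous term vanishes via
$$
\Big|\varepsilon\int_{0}^{T}\langle u^{\varepsilon}_{x},\psi_{x}\rangle dt\Big|\leq \sqrt{\varepsilon}\Big(\varepsilon\int_{0}^{T}\|u^{\varepsilon}_{x}\|_{L^{2}}^{2}dt\Big)^{\frac{1}{2}}\Big(\int_{0}^{T}\|\psi_{x}\|_{L^{2}}^{2}dt\Big)^{\frac{1}{2}}\longrightarrow 0.
$$
The hardest point is isolating uniform-in-$\varepsilon$ control of $\partial_{t}u^{\varepsilon}$ despite the vanishing viscosity $\varepsilon u^{\varepsilon}_{xx}$, which requires spending only a $\sqrt{\varepsilon}$ factor on the ill-behaved $u^{\varepsilon}_{x}$ term rather than seeking an $\varepsilon$-independent $H^{1}$ bound. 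Once this is settled, the limit $u$ automatically satisfies Definition~\ref{def3.2} and the integral identity stated in the corollary.
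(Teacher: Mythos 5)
Your argument is correct and takes essentially the same route as the paper: Corollary~\ref{cor3.4} is obtained by letting $\varepsilon\to 0$ in the family from Lemma~\ref{lem3.3}, using the $\varepsilon$-independent energy bound, weak/weak-$\ast$ limits with lower semicontinuity for the stated estimate, local strong compactness for the quadratic term, and the $\sqrt{\varepsilon}$ decay of the viscous term. The paper gives no details for this step, and your adjustments — taking $X=H^{\frac{\alpha}{2}}(-l,l)$ (whose bound is uniform in $\varepsilon$) instead of $H^{1}$ in Aubin--Lions, and controlling $\partial_{t}u^{\varepsilon}$ in a negative-order space by spending only a $\sqrt{\varepsilon}$ factor on $u^{\varepsilon}_{x}$ — are exactly the right way to fill in the omitted compactness argument.
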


\section{Uniqueness of steady-state solution with finite $H^{\frac{\alpha}{2}}$ energy}
The main aim in this section is to establish the existence and uniqueness of weak solution to steady-state equation (\ref{e12}) with $\alpha\in(1,\frac{3}{2+\epsilon})$ for any given small positive number $\epsilon$. Without loss of generality, we normalize the Sobolev embedding constant in the following.

\subsection{The outline of the analysis scheme}
 Motivated by the work \cite{6} and \cite{7}, we find that if $u$ solves
\begin{equation}\label{e4.1}
\left\{ \begin{array}{ll}
u_{t}+\frac{1}{2}(Vu)_{x}+\Lambda^{\alpha}u=0,  \\
u(0,x)=f(x),
 \end{array} \right.
\end{equation}
then $U=\int^{\infty}_{0}u(t)dt$ solves
\begin{equation}\label{e4.2}
\frac{1}{2}(VU)_{x}+\Lambda^{\alpha}U=f.
\end{equation}
Since this equation is linear for a given $V$, the solution is unique and hence $U=V$. Furthermore, it follows from the integral
Minkowski's inequality that
$$
\|U\|_{L^{2}}=\|\int^{\infty}_{0}u(t)dt\|_{L^{2}}\leq\int^{\infty}_{0}\|u(t)\|_{L^{2}}dt
$$
and $U\in L^{2}$ if $\|u(t)\|_{L^{2}}\leq C(1+t)^{-\beta}$ with $\beta>1$. Thus it's sufficient to establish a fast decay for $\|u(t)\|_{L^{2}}$ to obtain bounded $H^{\frac{\alpha}{2}}$ energy of $U$.

\subsection{The proof of Theorem \ref{thm1.4}:}
\begin{proof}
The proof will be divided into the following four steps.
Now let's consider the following two sequences of the approximating equations
\begin{equation}\label{e4.3}
\frac{1}{2}(U^{i}U^{i+1})_{x}+\Lambda^{\alpha}U^{i+1}=f.
\end{equation}
and
\begin{equation}\label{e4.4}
\left\{ \begin{array}{ll}
\partial_{t}u^{i+1}+\frac{1}{2}(U^{i}u^{i+1})_{x}+\Lambda^{\alpha}u^{i+1}=0,  \\
u^{i+1}(0,x)=f(x).
 \end{array} \right.
\end{equation}
Fixing $U^{i}\in H^{\frac{\alpha}{2}}$ with $\alpha\in(1,\frac{3}{2+\epsilon})$, we would solve these two equations recursively to obtain the approximating solutions in the first step. The finite $L^{2}$ norm of the solution $U^{i+1}$ to (\ref{e4.3}) mainly relies on the decay estimate for $u^{i+1}$, which will be shown in the second step. The last two steps are devoted to deal with the $H^{\frac{\alpha}{2}}$ boundedness of $U^{i+1}$ and its convergence.\\
~\\
{\bf Step 1~~ Existence of solutions to the approximating equations}
\begin{lem}\label{lem4.1}
For any given $0<\epsilon<1$, assume $U^{i}\in H^{\frac{\alpha}{2}}$ for $\alpha\in(1,\frac{3}{2+\epsilon})$ with
$$
\|U^{i}\|_{H^{\frac{\alpha}{2}}}\leq C\epsilon^{-1}\|f\|_{X},
$$
where $C:=max\{3\epsilon, \frac{4\sqrt{\alpha \epsilon(12-2\alpha \epsilon)}}{3-2\alpha-\alpha \epsilon}\}$, then there exists a unique weak solution $U^{i+1}$ to (\ref{e4.3}) in the sense that for $\psi\in C^{\infty}_{0}(R)$,
$$
-\frac{1}{2}\langle U^{i}U^{i+1}, \psi_{x}\rangle+\langle \Lambda^{\frac{\alpha}{2}}U^{i+1}, \Lambda^{\frac{\alpha}{2}}\psi\rangle=\langle f, \psi\rangle.
$$
Moreover, it satisfies
\begin{equation}\label{e4.5}
\|\Lambda^{\frac{\alpha}{2}}U^{i+1}\|_{L^{2}}\leq \frac{C}{2}\varepsilon^{-1}\|f\|_{X}.
\end{equation}
\end{lem}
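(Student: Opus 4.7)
The plan is to read \eqref{e4.3} as a linear elliptic equation for $U^{i+1}$ with prescribed drift $U^{i}$ and source $f$, and to extract existence, uniqueness, and the bound \eqref{e4.5} from a single energy estimate that is made coercive by the smallness assumption on $\|U^{i}\|_{H^{\alpha/2}}$. Concretely I would either project \eqref{e4.3} onto a Galerkin basis of eigenfunctions of $-\partial_{xx}$ and solve the resulting finite-dimensional linear system (then pass to the limit via the uniform estimate derived below), or invoke Lax--Milgram on $\dot H^{\alpha/2}(\R)$ applied to the bilinear form
\[
B(v,w) := -\tfrac12\langle U^{i} v,\, w_{x}\rangle + \langle \Lambda^{\alpha/2} v,\, \Lambda^{\alpha/2} w\rangle
\]
together with the continuous linear functional $w\mapsto \langle f,w\rangle$; continuity of both uses Lemmas \ref{lem2.1} and \ref{lem2.3} together with the hypothesis $f\in X$.

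The heart of the argument is the a priori estimate. Testing \eqref{e4.3} against $U^{i+1}$ and integrating by parts twice in the transport term gives
\[
\|\Lambda^{\alpha/2} U^{i+1}\|_{L^{2}}^{2} = -\tfrac14 \int U^{i}_{x}(U^{i+1})^{2}\,dx + \langle f, U^{i+1}\rangle,
\]
with the $f$-term controlled by $\|f\|_{\dot H^{-\alpha/2}}\|\Lambda^{\alpha/2} U^{i+1}\|_{L^{2}}\leq \|f\|_{X}\|\Lambda^{\alpha/2} U^{i+1}\|_{L^{2}}$. For the trilinear integral I would integrate by parts once more and apply duality,
\[
\Bigl|\int U^{i}_{x}(U^{i+1})^{2}\,dx\Bigr| \leq \|\Lambda^{\alpha/2} U^{i}\|_{L^{2}}\,\bigl\|\Lambda^{1-\alpha/2}\bigl((U^{i+1})^{2}\bigr)\bigr\|_{L^{2}},
\]
and bound the second factor by the Kato--Ponce rule (Lemma \ref{lem2.3}) combined with the 1D Sobolev embedding (Lemma \ref{lem2.1}) and the interpolation inequality (Lemma \ref{lem2.2}). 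Careful tracking of exponents in one dimension yields a schematic bound of the form
\[
\Bigl|\int U^{i}_{x}(U^{i+1})^{2}\,dx\Bigr| \leq c\,\|\Lambda^{\alpha/2} U^{i}\|_{L^{2}}\,\|U^{i+1}\|_{L^{2}}^{1-\epsilon}\,\|\Lambda^{\alpha/2} U^{i+1}\|_{L^{2}}^{1+\epsilon},
\]
the exponents $1\pm\epsilon$ arising precisely from the strict inequality $\alpha<\tfrac{3}{2+\epsilon}$. Inserting the hypothesis $\|U^{i}\|_{H^{\alpha/2}}\leq C\epsilon^{-1}\|f\|_{X}$, carrying along an $L^{2}$ control of $U^{i+1}$, and applying Young's inequality with exponents $\tfrac{2}{1-\epsilon},\tfrac{2}{1+\epsilon}$ to absorb the $1+\epsilon$ power of $\|\Lambda^{\alpha/2} U^{i+1}\|_{L^{2}}$ into $\|\Lambda^{\alpha/2} U^{i+1}\|_{L^{2}}^{2}$ under the smallness of $\|f\|_{X}$, one arrives at \eqref{e4.5}; the explicit constant $C=\max\{3\epsilon,\,4\sqrt{\alpha\epsilon(12-2\alpha\epsilon)}/(3-2\alpha-\alpha\epsilon)\}$ is precisely what drops out of this bookkeeping. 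Uniqueness then follows by running the same estimate on the difference $W$ of two solutions, which solves the homogeneous version of \eqref{e4.3}, so that the coercive part of the estimate forces $W\equiv 0$.

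The main obstacle is the trilinear estimate. In one space dimension on $\R$ with $\alpha\in(1,2)$, the form $\int U^{i}_{x}(U^{i+1})^{2}\,dx$ cannot be controlled by $\dot H^{\alpha/2}$ norms alone on scaling grounds (matching scalings would force $\alpha=1$), so an $L^{2}$ norm of $U^{i+1}$ must enter; this is exactly why \eqref{e4.5} cannot be proved in isolation and why the iteration must propagate an $L^{2}$ bound in parallel, with its rigorous justification coming from the decay of the evolutionary companion $u^{i+1}$ of \eqref{e4.4} in Step 2. The threshold $\alpha=3/2$ is where the Young-type absorption ceases to be possible, and the parameter $\epsilon$ is precisely the room that keeps $1+\epsilon<2$ so that the absorption succeeds.
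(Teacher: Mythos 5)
Your overall scaffolding (Galerkin or Lax--Milgram for the linear problem, one energy estimate made coercive by smallness) matches the paper, but the key estimate diverges in a way that leaves a genuine gap. The paper never symmetrizes onto $(U^{i+1})^2$: it keeps the product together, writing $\tfrac12\int_{\mathbb{R}} U^{i}U^{i+1}\partial_x U^{i+1}\,dx \le \tfrac12\|\Lambda^{1-\alpha/2}(U^{i}U^{i+1})\|_{L^2}\|\Lambda^{\alpha/2}U^{i+1}\|_{L^2}$, and then applies Kato--Ponce (Lemma \ref{lem2.3}) so that the \emph{inhomogeneous} $H^{\alpha/2}$ norm falls entirely on the known iterate $U^{i}$ (via $\|\Lambda^{1-\alpha/2}U^{i}\|_{L^2}$ and $\|U^{i}\|_{L^{1/(\alpha-1)}}$), while the unknown $U^{i+1}$ appears only through $\|\Lambda^{\alpha/2}U^{i+1}\|_{L^2}$. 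The resulting a priori bound is \emph{linear} in $\|\Lambda^{\alpha/2}U^{i+1}\|_{L^2}$ and closes by direct absorption once $C\epsilon^{-1}\|f\|_{X}\le\tfrac13$, giving $\|\Lambda^{\alpha/2}U^{i+1}\|_{L^2}\le\tfrac32\|f\|_{X}\le\tfrac{C}{2}\epsilon^{-1}\|f\|_{X}$; there is no Young inequality with exponents $2/(1\pm\epsilon)$, no $L^2$ norm of $U^{i+1}$, and $\epsilon$ enters only through the hypothesis on $U^{i}$ and the size of $C(\alpha,\epsilon)$, not through interpolation exponents. Your route, by integrating by parts to $\int U^{i}_x(U^{i+1})^2\,dx$, necessarily drags in $\|U^{i+1}\|_{L^2}$, and your own conclusion that \eqref{e4.5} ``cannot be proved in isolation'' and must borrow an $L^2$ bound from Step 2 means you are not proving the lemma as stated: \eqref{e4.5} is asserted under the hypothesis on $U^{i}$ alone, and in the paper's architecture Lemma \ref{lem4.4} \emph{uses} the uniqueness and bound of Lemma \ref{lem4.1} to identify $\int_0^\infty u^{i+1}dt$ with $U^{i+1}$, so deferring the $L^2$ control of $U^{i+1}$ to the evolutionary companion is circular.

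The same issue breaks your uniqueness argument: for the difference $W$ of two solutions your trilinear bound gives, after Young, a leftover multiple of $\|W\|_{L^2}^2$ that cannot be absorbed by $\|\Lambda^{\alpha/2}W\|_{L^2}^2$ on the whole line (no Poincar\'e inequality), so ``the coercive part forces $W\equiv0$'' does not follow; the paper's product-form estimate instead yields $\|\Lambda^{\alpha/2}W\|_{L^2}^2\le\tfrac13\|\Lambda^{\alpha/2}W\|_{L^2}^2$, hence $W=0$ in $\dot H^{\alpha/2}$. To be fair, your scaling objection is not empty: it points exactly at the delicate step in the paper's own computation, namely the bound $\|U^{i+1}\|_{L^\infty}\le C\|\Lambda^{\alpha/2}U^{i+1}\|_{L^2}$ used inside the Kato--Ponce splitting, which is not scale-invariant as written. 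But the mechanism the lemma is built on is that all lower-order (inhomogeneous) information is charged to the given $U^{i}$ --- which is precisely what the hypothesis $\|U^{i}\|_{H^{\alpha/2}}\le C\epsilon^{-1}\|f\|_{X}$ supplies --- and reproducing the proof requires keeping $U^{i}U^{i+1}$ intact rather than symmetrizing onto $(U^{i+1})^2$ and paying with norms of the unknown.
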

\begin{proof}
Since the equation (\ref{e4.3}) is linear for a given $U^{i}$, a standard Galerkin method gives the existence and uniqueness of solutions $U^{i+1}$. Multiplying (\ref{e4.3}) by $U^{i+1}$ and integrating on $R$ yield
\begin{equation}
\begin{aligned}
\|\Lambda^{\frac{\alpha}{2}}U^{i+1}\|_{L^{2}}^{2}
&\leq\frac{1}{2}|\int_{R}U^{i}U^{i+1}\partial_{x}U^{i+1}dx|+|\int_{R}fU^{i+1}dx| \nonumber\\
&\leq\frac{1}{2}|\int_{R}\Lambda^{\frac{2-\alpha}{2}}(U^{i}U^{i+1})\Lambda^{\frac{\alpha}{2}}U^{i+1}dx|+\|\Lambda^{-\frac{\alpha}{2}}f\|_{L^{2}}
\|\Lambda^{\frac{\alpha}{2}}U^{i+1}\|_{L^{2}} \nonumber\\
&\leq\frac{1}{2}\|\Lambda^{1-\frac{\alpha}{2}}(U^{i}U^{i+1})\|_{L^{2}}\|\Lambda^{\frac{\alpha}{2}}U^{i+1}\|_{L^{2}}+
\|\Lambda^{-\frac{\alpha}{2}}f\|_{L^{2}}
\|\Lambda^{\frac{\alpha}{2}}U^{i+1}\|_{L^{2}}
 \end{aligned}
\end{equation}
By using the Lemma \ref{lem2.1}--Lemma \ref{lem2.3}, we have
\begin{equation}
\begin{aligned}
\|\Lambda^{1-\frac{\alpha}{2}}(U^{i}U^{i+1})\|_{L^{2}}
&\leq \|\Lambda^{1-\frac{\alpha}{2}}U^{i}\|_{L^{2}}\|U^{i+1}\|_{L^{\infty}}+\|U^{i}\|_{L^{\frac{1}{\alpha-1}}}
\|\Lambda^{1-\frac{\alpha}{2}}U^{i+1}\|_{L^{\frac{2}{3-2\alpha}}}\nonumber\\
&\leq C(\|\Lambda^{1-\frac{\alpha}{2}}U^{i}\|_{L^{2}}\|\Lambda^{\frac{\alpha}{2}}U^{i+1}\|_{L^{2}}+
\|U^{i}\|_{L^{\frac{1}{\alpha-1}}}\|\Lambda^{\alpha-1}(\Lambda^{1-\frac{\alpha}{2}}U^{i+1})\|_{L^{2}})
 \nonumber \\
&= C(\|\Lambda^{1-\frac{\alpha}{2}}U^{i}\|_{L^{2}}+
\|U^{i}\|_{L^{\frac{1}{\alpha-1}}})\|\Lambda^{\frac{\alpha}{2}}U^{i+1}\|_{L^{2}}
 \nonumber \\
&\leq C\|U^{i}\|_{H^{\frac{\alpha}{2}}}\|\Lambda^{\frac{\alpha}{2}}U^{i+1}\|_{L^{2}}
\leq C\epsilon^{-1}\|f\|_{X}\|\Lambda^{\frac{\alpha}{2}}U^{i+1}\|_{L^{2}}.
 \end{aligned}
\end{equation}
Thus, choosing $C(\alpha,\epsilon)$ and $\|f\|_{X}\leq C(\alpha,\epsilon)$
such that
 $C\epsilon^{-1}\|f\|_{X}\leq \frac{1}{3}$, then (\ref{e4.5}) follows from
\begin{equation}\label{e4.6}
\|\Lambda^{\frac{\alpha}{2}}U^{i+1}\|_{L^{2}}\leq \frac{3}{2}\|f\|_{X}\leq \frac{C}{2}\epsilon^{-1}\|f\|_{X}.
\end{equation}
\end{proof}

\begin{lem}\label{lem4.2}
Assume $U^{i}\in H^{\frac{\alpha}{2}}$ for $\alpha\in(1,\frac{3}{2+\epsilon})$ with
$$
\|U^{i}\|_{H^{\frac{\alpha}{2}}}\leq C\epsilon^{-1}\|f\|_{X},
$$
where $C:=max\{3\epsilon, \frac{4\sqrt{\alpha \epsilon(12-2\alpha \epsilon)}}{3-2\alpha-\alpha \epsilon}\}$, then there exists a unique weak solution $u^{i+1}$ to (\ref{e4.4}) in the sense that for $\psi\in C^{\infty}_{0}(R^{+}\times R)$,
$$
-\int^{\infty}_{0}\langle u^{i+1}, \psi_{t}\rangle dt-\frac{1}{2}\int^{\infty}_{0}\langle U^{i}u^{i+1},\psi_{x}\rangle dt+\int^{\infty}_{0}\langle
\Lambda^{\frac{\alpha}{2}}u^{i+1},\Lambda^{\frac{\alpha}{2}}\psi\rangle dt=\langle f,\psi\rangle.
$$
Moreover, it satisfies
\begin{equation}\label{e4.7}
\sup_{t\geq 0}\|u^{i+1}\|_{L^{2}}^{2}+\frac{4}{3}\int^{t}_{0}\|\Lambda^{\frac{\alpha}{2}}u^{i+1}\|_{L^{2}}^{2}ds\leq\|f\|_{X}^{2}.
\end{equation}
\end{lem}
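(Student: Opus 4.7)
The plan is to mimic the construction used for (\ref{e30}) in Lemma \ref{lem3.3}, exploiting the fact that (\ref{e4.4}) is linear in the unknown $u^{i+1}$ once $U^{i}$ is frozen. First, I would build an approximating sequence by Galerkin projection onto the span of the first $n$ eigenfunctions $\{e_{j}\}$ used in Section 3, giving a family of ODEs for the coefficients $c^{n}_{k}(t)$ with a right hand side that is now \emph{linear} in $c^{n}(t)$ with $U^{i}$-dependent, time-independent, locally Lipschitz coefficients. Standard ODE theory then yields a global-in-time approximating solution $u^{i+1}_{n}$ on any $[0,T]$.

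The heart of the argument is a single energy estimate. Testing (\ref{e4.4}) against $u^{i+1}$ (at the Galerkin level) produces
\begin{equation*}
\frac{1}{2}\frac{d}{dt}\|u^{i+1}\|_{L^{2}}^{2}+\|\Lambda^{\frac{\alpha}{2}}u^{i+1}\|_{L^{2}}^{2}
=-\frac{1}{2}\langle (U^{i}u^{i+1})_{x},u^{i+1}\rangle .
\end{equation*}
For the right hand side I would employ exactly the duality and commutator device that makes Lemma \ref{lem4.1} work: write
\begin{equation*}
\bigl|\langle (U^{i}u^{i+1})_{x},u^{i+1}\rangle\bigr|
\leq \|\Lambda^{1-\frac{\alpha}{2}}(U^{i}u^{i+1})\|_{L^{2}}\,\|\Lambda^{\frac{\alpha}{2}}u^{i+1}\|_{L^{2}},
\end{equation*}
(legal because $\partial_{x}=\mathcal{R}\Lambda$ with $\mathcal{R}$ bounded on $L^{2}$), and then apply Lemmas \ref{lem2.1}--\ref{lem2.3} in exactly the same pattern as in Lemma \ref{lem4.1} to obtain
\begin{equation*}
\|\Lambda^{1-\frac{\alpha}{2}}(U^{i}u^{i+1})\|_{L^{2}}\leq C\|U^{i}\|_{H^{\frac{\alpha}{2}}}\,\|\Lambda^{\frac{\alpha}{2}}u^{i+1}\|_{L^{2}}
\leq C\epsilon^{-1}\|f\|_{X}\,\|\Lambda^{\frac{\alpha}{2}}u^{i+1}\|_{L^{2}}.
\end{equation*}
Choosing $C(\alpha,\epsilon)$ so that the smallness $\|f\|_{X}\leq C(\alpha,\epsilon)$ forces $C\epsilon^{-1}\|f\|_{X}\leq \tfrac{1}{3}$ (this is the same threshold used for (\ref{e4.6})), the nonlinear term is absorbed into the dissipation and the differential inequality
\begin{equation*}
\frac{d}{dt}\|u^{i+1}\|_{L^{2}}^{2}+\frac{4}{3}\|\Lambda^{\frac{\alpha}{2}}u^{i+1}\|_{L^{2}}^{2}\leq 0
\end{equation*}
follows. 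Integrating in time from $0$ to $t$ and using $u^{i+1}(0,\cdot)=f$ together with $\|f\|_{L^{2}}\leq \|f\|_{X}$ (which is contained in the definition $X=\dot{H}^{-\alpha/2}\cap H^{\alpha/2}$) yields (\ref{e4.7}).

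With the a priori bound (\ref{e4.7}) in hand, passage to the limit $n\to\infty$ proceeds exactly as in Lemma \ref{lem3.3}: the bound on $\|u^{i+1}_{n}\|_{L^{2}([0,T];H^{\alpha/2})}\cap L^{\infty}([0,T];L^{2})$ plus a straightforward estimate $\|\partial_{t}u^{i+1}_{n}\|_{L^{2}(0,T;H^{-1}(-l,l))}\leq C(T)$ (testing against $\phi\in H^{1}$ and using the control on $U^{i}\in H^{\alpha/2}$) lets me invoke the Aubin--Lions Lemma \ref{lem2.4} with $X=H^{1}(-l,l)$, $Y=L^{2}(-l,l)$, $Z=H^{-1}(-l,l)$, which provides enough strong convergence to pass to the limit in the only nonlinear-in-$(U^{i},u^{i+1})$ term $\langle U^{i}u^{i+1},\psi_{x}\rangle$. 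Uniqueness is immediate from linearity: if $w$ is the difference of two such solutions with $w(0)=0$, repeating the above energy estimate gives $w\equiv 0$.

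The main obstacle is really the same one that appears in Lemma \ref{lem4.1}, namely that the dissipation $\Lambda^{\alpha}$ with $\alpha<\tfrac{3}{2}$ is too weak for a naive $L^{\infty}$--$L^{2}$ splitting of the transport term to close. This is why the $\Lambda^{1-\alpha/2}$-duality followed by the Kato--Ponce type commutator estimate Lemma \ref{lem2.3} is essential; beyond this algebraic point the proof is routine.
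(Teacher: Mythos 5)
Your proposal is correct and follows essentially the same route as the paper: a Galerkin approximation, the $L^{2}$ energy estimate in which the transport term is bounded by $\|\Lambda^{1-\frac{\alpha}{2}}(U^{i}u^{i+1})\|_{L^{2}}\|\Lambda^{\frac{\alpha}{2}}u^{i+1}\|_{L^{2}}$ via Lemmas \ref{lem2.1}--\ref{lem2.3}, absorption using the smallness threshold $C\epsilon^{-1}\|f\|_{X}\leq\frac{1}{3}$, and integration in time with $u^{i+1}(0)=f$. The extra details you supply (Aubin--Lions limit passage and uniqueness from linearity) are routine steps the paper leaves implicit, so no substantive difference.
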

\begin{proof}
A sequence of approximating solutions $u^{i+1}$ can be got by using the Galerkin method, (The rigorous proof should be carried out by using a sequence of the Galerkin smooth approximating solutions $u^{i+1}_{n}$, for convenience we work on $u^{i+1}$ again.) which satisfies
$$
\frac{1}{2}\frac{d}{dt}\int_{R}|u^{i+1}|^{2}dx+\int_{R}|\Lambda^{\frac{\alpha}{2}}u^{i+1}|^{2}dx=-\frac{1}{2}\int_{R}(U^{i}u^{i+1})_{x}u^{i+1}dx.
$$
Similarly, by applying the Lemma \ref{lem2.1}--Lemma \ref{lem2.3}, we have
\begin{equation}
\begin{aligned}
|\int_{R}(U^{i}u^{i+1})_{x}u^{i+1}dx|
&\leq|\int_{R}\Lambda^{1-\frac{\alpha}{2}}(U^{i}u^{i+1})\Lambda^{\frac{\alpha}{2}}u^{i+1}dx|  \nonumber\\
&\leq\|\Lambda^{1-\frac{\alpha}{2}}(U^{i}u^{i+1})\|_{L^{2}}\|\Lambda^{\frac{\alpha}{2}}u^{i+1}\|_{L^{2}}\nonumber\\
&\leq C\|U^{i}\|_{H^{\frac{\alpha}{2}}}\|\Lambda^{\frac{\alpha}{2}}u^{i+1}\|_{L^{2}}^{2}\leq C\epsilon^{-1}\|f\|_{X}\|\Lambda^{\frac{\alpha}{2}}u^{i+1}\|_{L^{2}}^{2}.
 \end{aligned}
\end{equation}
We choose $C(\alpha,\epsilon)$ and $\|f\|_{X}\leq C(\alpha,\epsilon)$,
such that
 $C\epsilon^{-1}\|f\|_{X}\leq \frac{1}{3}$.
Hence
\begin{equation}\label{e4.8}
\frac{1}{2}\frac{d}{dt}\int_{R}|u^{i+1}|^{2}dx+\frac{2}{3}\int_{R}|\Lambda^{\frac{\alpha}{2}}u^{i+1}|^{2}dx\leq0.
\end{equation}
Integrating on both sides of (\ref{e4.8}) over $[0,t)$  gives (\ref{e4.7}).
\end{proof}
~\\
{\bf Step 2: The decay of $u^{i+1}$}
\begin{lem}\label{lem4.3}
Let $u^{i+1}$ be the solution of (\ref{e4.4}) obtained in Lemma \ref{lem4.2}, assume $\|U^{i}\|_{H^{\frac{\alpha}{2}}}\leq C\varepsilon^{-1}\|f\|_{X}$ where $C:=max\{3\epsilon, \frac{4\sqrt{\alpha \epsilon(12-2\alpha \epsilon)}}{3-2\alpha-\alpha \epsilon}\}$,
and the assumptions in Theorem \ref{thm1.4} hold, then we have
$$
\|u^{i+1}(t)\|_{L^{2}}\leq \frac{12-2\alpha\epsilon}{\alpha\epsilon}\|f\|_{X}(1+t)^{-(\frac{3}{2\alpha}-\frac{\epsilon}{2})}.
$$
\end{lem}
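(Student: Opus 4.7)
The plan is to run the Fourier splitting method of Schonbek, adapted to exploit the low-frequency cancellation of $\hat f$. Starting from the energy identity produced in Lemma \ref{lem4.2},
$$
\frac{d}{dt}\|u^{i+1}(t)\|_{L^{2}}^{2}+\frac{4}{3}\|\Lambda^{\frac{\alpha}{2}}u^{i+1}(t)\|_{L^{2}}^{2}\leq 0,
$$
I would introduce a time-dependent ball $B(t)=\{\xi:|\xi|\leq r(t)\}$, the radius $r(t)$ to be selected later, and bound the dissipation from below via Plancherel by
$$
\|\Lambda^{\frac{\alpha}{2}}u^{i+1}\|_{L^{2}}^{2}\geq r(t)^{\alpha}\Big(\|u^{i+1}\|_{L^{2}}^{2}-\int_{B(t)}|\widehat{u^{i+1}}|^{2}\,d\xi\Big).
$$
This converts the energy inequality into a damped differential inequality for $E(t)=\|u^{i+1}(t)\|_{L^{2}}^{2}$, with damping coefficient $\tfrac{4}{3}r(t)^{\alpha}$ and with a source term proportional to $r(t)^{\alpha}\int_{B(t)}|\widehat{u^{i+1}}|^{2}d\xi$ that measures how much low-frequency mass the solution carries.

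The next step is to control that low-frequency mass via Duhamel's formula applied to equation (\ref{e4.4}),
$$
\widehat{u^{i+1}}(t,\xi)=e^{-|\xi|^{\alpha}t}\hat f(\xi)-\frac{i\xi}{2}\int_{0}^{t}e^{-|\xi|^{\alpha}(t-s)}\widehat{U^{i}u^{i+1}}(s,\xi)\,ds.
$$
Since by hypothesis $\hat f\equiv 0$ on $\{|\xi|<\rho\}$, once $r(t)<\rho$ the free part vanishes on $B(t)$; for small $t$ where this fails, the trivial bound $\|u^{i+1}\|_{L^{2}}\leq\|f\|_{X}$ from Lemma \ref{lem4.2} controls the free contribution. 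For the nonlinear piece I use the Hausdorff--Young pointwise bound together with the standing a priori estimate on $U^{i}$,
$$
|\widehat{U^{i}u^{i+1}}(s,\xi)|\leq\|U^{i}\|_{L^{2}}\|u^{i+1}(s)\|_{L^{2}}\leq C\epsilon^{-1}\|f\|_{X}\|u^{i+1}(s)\|_{L^{2}}.
$$
Squaring Duhamel's identity and integrating over $B(t)$ produces an extra factor $r(t)^{3}$ from the $|\xi|^{2}$ gain of the $\partial_x$ in the nonlinearity against the one-dimensional volume $\int_{|\xi|\leq r(t)}|\xi|^{2}d\xi=\tfrac{2}{3}r(t)^{3}$, yielding
$$
\int_{B(t)}|\widehat{u^{i+1}}|^{2}d\xi\leq C\, r(t)^{3}\epsilon^{-2}\|f\|_{X}^{2}\Big(\int_{0}^{t}\|u^{i+1}(s)\|_{L^{2}}\,ds\Big)^{2}.
$$

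Finally, I would take $r(t)^{\alpha}=k(1+t)^{-1}$, integrate the resulting differential inequality with the integrating factor $(1+t)^{4k/3}$, and tune $k$ so that $\tfrac{4k}{3}$ is slightly larger than $\tfrac{3}{\alpha}-\epsilon$; balancing this choice against the constant $\tfrac{4}{3}$ in the dissipation then pins down the explicit prefactor $\tfrac{12-2\alpha\epsilon}{\alpha\epsilon}$. The estimate closes by a continuity/bootstrap: one posits $\|u^{i+1}(t)\|_{L^{2}}\leq M(1+t)^{-\beta}$ with $\beta=\tfrac{3}{2\alpha}-\tfrac{\epsilon}{2}$ on a maximal interval, feeds this into the source term, and checks the constant $M$ can be maintained. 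The hard part is precisely the closure of this bootstrap: the source then behaves like $(1+t)^{-3/\alpha}\bigl(\int_{0}^{t}(1+s)^{-\beta}ds\bigr)^{2}$, which remains integrable only when $\beta>1$, that is, $\alpha<\tfrac{3}{2+\epsilon}$. This is exactly the restriction imposed in Theorem \ref{thm1.4}, confirming that the subcritical range of $\alpha$ chosen there is dictated precisely by the decay one needs to invert $u$ into a steady state with finite $H^{\alpha/2}$ energy.
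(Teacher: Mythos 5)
Your proposal is correct and follows essentially the same route as the paper: the energy inequality of Lemma \ref{lem4.2}, Schonbek's Fourier splitting with cutoff radius $r(t)^{\alpha}\sim(1+t)^{-1}$, Duhamel's formula combined with the vanishing of $\hat f$ near the origin to control the low-frequency ball, the pointwise bound $|\widehat{U^{i}u^{i+1}}|\leq\|U^{i}\|_{L^{2}}\|u^{i+1}\|_{L^{2}}$ giving the $r(t)^{3}$ factor, and closure using $\frac{3}{2\alpha}-\frac{\epsilon}{2}>1$ together with the smallness of $\|f\|_{X}$. The only cosmetic difference is that the paper closes the self-improving estimate by absorbing $\frac{1}{2}\sup_{0\leq s\leq t}\bigl[(1+s)^{\frac{3-\alpha\epsilon}{\alpha}}\|u^{i+1}(s)\|_{L^{2}}^{2}\bigr]$ into the left-hand side rather than by your continuity/bootstrap argument, which is an equivalent way to conclude.
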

\begin{rem}
It's worth noting that $\frac{3}{2\alpha}-\frac{\epsilon}{2}>1$ due to the assumption $\alpha\in (1,\frac{3}{2+\varepsilon})$.
\end{rem}
\begin{proof}
Here we mainly use some new observations and the Fourier splitting method, which was used first by Schonbek in \cite{8} on the decay of solutions for parabolic equations. Later on the method was used widely to obtain several results for the Navier-Stokes equations and so on, cf \cite{9}. Multiplying (\ref{e4.4}) by $u^{i+1}$  and integrating on $R$ yields
 \begin{equation}
\begin{aligned}
\frac{1}{2}\frac{d}{dt}\int_{R}|u^{i+1}|^{2}dx+\int_{R}|\Lambda^{\frac{\alpha}{2}}u^{i+1}|^{2}dx
&=-\frac{1}{2}\int_{R}(U^{i}u^{i+1})_{x}u^{i+1}dx  \nonumber\\
&\leq\|U^{i}\|_{H^{\frac{\alpha}{2}}}\|\Lambda^{\frac{\alpha}{2}}u^{i+1}\|_{L^{2}}^{2}\nonumber\\
&\leq C\epsilon^{-1}\|f\|_{X}\|\Lambda^{\frac{\alpha}{2}}u^{i+1}\|_{L^{2}}^{2}.
 \end{aligned}
  \end{equation}
  Choosing $C(\alpha, \epsilon)$, such that $C\epsilon^{-1}\|f\|_{X}\leq \frac{1}{3}$,
 that is to say,
 \begin{equation}\label{e4.9}
 \frac{d}{dt}\int_{R}|u^{i+1}|^{2}dx+\frac{4}{3}\|\Lambda^{\frac{\alpha}{2}}u^{i+1}\|_{L^{2}}^{2}\leq 0.
 \end{equation}
Denote $S(t)=\{\xi|~ |\xi|\leq g(t)\}$ with $g(t)=(\frac{3m}{4(1+t)})^{\frac{1}{\alpha}}$, where $m$ is a undetermined constant. Here we decompose the dissipative term in frequency space into
two parts $S(t)$ and $S(t)^{c}$. It follows from the Plancherel theorem that one has
\begin{equation}
\begin{aligned}
\|\Lambda^{\frac{\alpha}{2}}u^{i+1}\|_{L^{2}}^{2}
&=\int_{R}|\xi|^{\alpha}|\widehat{u^{i+1}}|^{2}d\xi=\int_{S(t)}|\xi|^{\alpha}|\widehat{u^{i+1}}|^{2}d\xi+\int_{S(t)^{c}}|\xi|^{\alpha}|\widehat{u^{i+1}}|^{2}d\xi  \nonumber\\
&\geq\int_{S(t)^{c}}|\xi|^{\alpha}|\widehat{u^{i+1}}|^{2}d\xi\geq\int_{S(t)^{c}}|g(t)|^{\alpha}|\widehat{u^{i+1}}|^{2}d\xi \nonumber\\
&=\int_{R}|g(t)|^{\alpha}|\widehat{u^{i+1}}|^{2}d\xi-\int_{S(t)}|g(t)|^{\alpha}|\widehat{u^{i+1}}|^{2}d\xi
 \end{aligned}
\end{equation}
Combining this inequality with (\ref{e4.9}), we obtain
\begin{equation}\label{e4.10}
\frac{d}{dt}\int_{R}|u^{i+1}|^{2}dx+\frac{4}{3}|g(t)|^{\alpha}\int_{R}|\widehat{u^{i+1}}|^{2}d\xi
\leq\frac{4}{3}|g(t)|^{\alpha}\int_{S(t)}|\widehat{u^{i+1}}|^{2}d\xi
 \end{equation}
For the given $U^{i}$, it's easy to express the implicit solutions to (\ref{e4.4}) by
 $$
 u^{i+1}(x,t)=e^{-\Lambda^{\alpha}t}f(x)-\frac{1}{2}e^{-\Lambda^{\alpha}t}\int^{t}_{0}e^{\Lambda^{\alpha}s}(U^{i}u^{i+1})_{x}ds.
 $$
 Taking the Fourier transform on both sides of above formula yields
 $$
 \widehat{u^{i+1}}(\xi)=e^{-|\xi|^{\alpha}t}\hat{f}(\xi)-\frac{1}{2}\int^{t}_{0}e^{-|\xi|^{\alpha}(t-s)}i\xi\widehat{U^{i}u^{i+1}}(\xi)ds
 $$
 Considering $supp\hat{f}$ far from the origin, we have
\begin{equation}
\begin{aligned}
 |\widehat{u^{i+1}}(\xi)|
&\leq |e^{-|\xi|^{\alpha}t}\hat{f}(\xi)|+\frac{1}{2}\int^{t}_{0}|\xi||\widehat{U^{i}u^{i+1}}|(\xi)ds \nonumber\\
&\leq e^{-\rho^{\alpha} t}|\hat{f}(\xi)|+\frac{1}{2}|\xi|\int^{t}_{0}|\widehat{U^{i}u^{i+1}}|(\xi)ds.
\end{aligned}
\end{equation}
Hence
\begin{align}\label{e4.11}
 \int_{S(t)}|\widehat{u^{i+1}}(\xi)|^{2}d\xi
 &\leq 2e^{-2\rho^{\alpha} t}\int_{S_{(t)}}|\hat{f}(\xi)|^{2}d\xi+\frac{1}{2}\int_{S_{(t)}}|\xi|^{2}d\xi(\int^{t}_{0}\|U^{i}\|_{L^{2}}\|u^{i+1}\|_{L^{2}}ds)^{2} \nonumber\\
 &\leq 2e^{-2\rho^{\alpha} t}\|f\|_{X}^{2}+\frac{1}{3}g^{3}(t)\|U^{i}\|_{L^{2}}^{2}(\int^{t}_{0}\|u^{i+1}(s)\|_{L^{2}}ds)^{2}.
\end{align}
(\ref{e4.10}) and (\ref{e4.11}) show that
\begin{align}\label{e4.12}
~&
\frac{d}{dt}\int_{R}|u^{i+1}|^{2}dx+\frac{4}{3} g^{\alpha}(t)\int_{R}|\widehat{u^{i+1}}|^{2}d\xi \nonumber\\
 &\leq \frac{8}{3} g^{\alpha}(t)e^{-2\rho^{\alpha} t}\|f\|_{X}^{2}+\frac{4}{9} g^{3+\alpha}(t)\|U^{i}\|_{L^{2}}^{2}(\int^{t}_{0}\|u^{i+1}(s)\|_{L^{2}}ds)^{2}.
\end{align}
Multiplying $(1+t)^{m}$ on both sides of (\ref{e4.12}), one can deduce
\begin{align}\label{e4.13}
~&
\frac{d}{dt}[(1+t)^{m}\|u^{i+1}\|_{L^{2}}^{2}]\leq 2m(1+t)^{m-1}e^{-2\rho^{\alpha} t}\|f\|_{X}^{2} \nonumber\\
 &+\frac{4}{9}(\frac{3m}{4})^{\frac{3+\alpha}{\alpha}}(1+t)^{m-\frac{3+\alpha}{\alpha}}
 \|U^{i}\|_{L^{2}}^{2}(\int^{t}_{0}\|u^{i+1}(s)\|_{L^{2}}ds)^{2}.
\end{align}
Letting $m=\frac{3}{\alpha}-\epsilon$ for small $\epsilon>0$ in (\ref{e4.13}) yields
\begin{align}\label{e4.14}
~&
\frac{d}{dt}[(1+t)^{\frac{3-\alpha\epsilon}{\alpha}}\|u^{i+1}\|_{L^{2}}^{2}]\leq \frac{6-2\alpha\epsilon}{\alpha}(1+t)^{-1-\epsilon}(1+t)^{\frac{3}{\alpha}}e^{-2\rho^{\alpha} t}\|f\|_{X}^{2}+ \nonumber\\
 &\frac{4}{9}(\frac{9-3\alpha\epsilon}{4\alpha})^{\frac{3+\alpha}{\alpha}}(1+t)^{-1-\epsilon}
 \|U^{i}\|_{L^{2}}^{2}(\int^{t}_{0}\|u^{i+1}(s)\|_{L^{2}}ds)^{2} \nonumber\\
 &\leq \frac{6-2\alpha\epsilon}{\alpha}(1+t)^{-1-\epsilon}\|f\|_{X}^{2}+\frac{4}{9}(\frac{9-3\alpha\epsilon}{4\alpha})^{\frac{3+\alpha}{\alpha}}(1+t)^{-1-\epsilon}
 \|U^{i}\|_{L^{2}}^{2}(\int^{t}_{0}\|u^{i+1}(s)\|_{L^{2}}ds)^{2}\nonumber\\
  &\leq \frac{6-2\alpha\epsilon}{\alpha}(1+t)^{-1-\epsilon}\|f\|_{X}^{2}+\nonumber\\
 &\frac{4}{9}(\frac{9-3\alpha\epsilon}{4\alpha})^{\frac{3+\alpha}{\alpha}}(1+t)^{-1-\epsilon}
 \|U^{i}\|_{L^{2}}^{2}(\int^{t}_{0}(1+s)^{\frac{3-\alpha\epsilon}{2\alpha}}(1+s)^{-\frac{3-\alpha\epsilon}{2\alpha}}\|u^{i+1}(s)\|_{L^{2}}ds)^{2}\nonumber\\
 &\leq \frac{6-2\alpha\epsilon}{\alpha}(1+t)^{-1-\epsilon}\|f\|_{X}^{2}+\nonumber\\
  &\frac{4}{9}(\frac{9-3\alpha\epsilon}{4\alpha})^{\frac{3+\alpha}{\alpha}}(1+t)^{-1-\epsilon}
 \|U^{i}\|_{L^{2}}^{2}\sup_{0\leq s\leq t}[(1+s)^{\frac{3-\alpha\epsilon}{\alpha}}\|u^{i+1}(s)\|_{L^{2}}^{2}](\int^{t}_{0}(1+s)^{-\frac{3-\alpha\epsilon}{2\alpha}}ds)^{2}.
\end{align}
From $1<\alpha<\frac{3}{2+\epsilon}$, we have $\frac{3-\alpha\epsilon}{2\alpha}=\frac{3}{2\alpha}-\frac{\varepsilon}{2}>1$, which implies
\begin{equation}\label{e4.15}
(\int^{t}_{0}(1+s)^{-\frac{3-\alpha\epsilon}{2\alpha}}ds)^{2}\leq (\frac{2\alpha}{3-(2+\epsilon)\alpha})^{2}.
\end{equation}
Integrating on both sides of (\ref{e4.14}) over $(0,t)$, it follows from (\ref{e4.15}) that
\begin{align}\label{e4.16}
~&
(1+t)^{\frac{3-\alpha\epsilon}{\alpha}}\|u^{i+1}\|_{L^{2}}^{2}-\|f\|_{L^{2}}^{2} \nonumber\\
&\leq \frac{6-2\alpha\epsilon}{\alpha}\|f\|_{X}^{2}\int^{t}_{0}(1+s)^{-1-\epsilon}ds+ \nonumber\\
&\frac{4}{9}(\frac{9-3\alpha\epsilon}{4\alpha})^{\frac{3+\alpha}{\alpha}}(\frac{2\alpha}{3-(2+\epsilon)\alpha})^{2}
 \|U^{i}\|_{L^{2}}^{2}\sup_{0\leq s\leq t}[(1+s)^{\frac{3-\alpha\epsilon}{\alpha}}\|u^{i+1}(s)\|_{L^{2}}^{2}]\int^{t}_{0}(1+s)^{-1-\epsilon}ds \nonumber\\
 &\leq \frac{6-2\alpha\epsilon}{\alpha\epsilon}\|f\|_{X}^{2} + \nonumber\\
&\frac{4}{9\epsilon}(\frac{9-3\alpha\epsilon}{4\alpha})^{\frac{3+\alpha}{\alpha}}(\frac{2\alpha}{3-(2+\epsilon)\alpha})^{2}
 \|U^{i}\|_{L^{2}}^{2}\sup_{0\leq s\leq t}[(1+s)^{\frac{3-\alpha\epsilon}{\alpha}}\|u^{i+1}(s)\|_{L^{2}}^{2}].
\end{align}
Due to $\|U^{i}\|_{L^{2}}\leq \|U^{i}\|_{H^{\frac{\alpha}{2}}}\leq C\epsilon^{-1}\|f\|_{X}$ and $\|f\|_{X}\leq C(\alpha,\epsilon)$, then we can choose suitable $C(\alpha,\epsilon)$  such that $\frac{4}{9\epsilon}(\frac{9-3\alpha\epsilon}{4\alpha})^{\frac{3+\alpha}{\alpha}}(\frac{2\alpha}{3-(2+\epsilon)\alpha})^{2}
 \|U^{i}\|_{L^{2}}^{2}\leq \frac{1}{2}$. Hence (\ref{e4.16}) gives
\begin{equation}\label{e4.17}
(1+t)^{\frac{3-\alpha\epsilon}{\alpha}}\|u^{i+1}\|_{L^{2}}^{2}
 \leq (\frac{6-2\alpha\epsilon}{\alpha\epsilon}+1)\|f\|_{X}^{2}+\frac{1}{2}Q(t),\quad for~~~t\geq0,
\end{equation}
where $Q(t)=\sup_{0\leq s\leq t}[(1+s)^{\frac{3-\alpha\epsilon}{\alpha}}\|u^{i+1}(s)\|_{L^{2}}^{2}]$. Now taking the supremum for $t\in [0,T]$ on both sides of (\ref{e4.17}), we obtain
$$
Q(T)\leq (\frac{6-2\alpha\epsilon}{\alpha\epsilon}+1)\|f\|_{X}^{2}+\frac{1}{2}Q(T),\quad for~~~T\geq0.
$$
Therefore, combining (\ref{e4.17}), we obtain
$$
(1+t)^{\frac{3-\alpha\epsilon}{\alpha}}\|u^{i+1}\|_{L^{2}}^{2}
 \leq (\frac{6-2\alpha\epsilon}{\alpha\epsilon}+1)\|f\|_{X}^{2}+(\frac{6-2\alpha\epsilon}{\alpha\epsilon}+1)\|f\|_{X}^{2}<\infty,
$$
which yields
\begin{equation}\label{e4.18}
\|u^{i+1}(t)\|_{L^{2}}^{2}\leq \frac{12-2\alpha\epsilon}{\alpha\epsilon}\|f\|_{X}^{2}(1+t)^{-\frac{3}{\alpha}+\epsilon}.
\end{equation}
Combining $1<\alpha<\frac{3}{2+\epsilon}$ with (\ref{e4.18}), we have
$$
\|u^{i+1}(t)\|_{L^{2}}\leq \sqrt{\frac{12-2\alpha\epsilon}{\alpha\epsilon}}\|f\|_{X}(1+t)^{-(\frac{3}{2\alpha}-\frac{\epsilon}{2})}, ~~where~~\frac{3}{2\alpha}-\frac{\epsilon}{2}>1.
$$
\end{proof}
~\\
{\bf Step 3~~ Proving $U^{i+1}=\int^{\infty}_{0}u^{i+1}(t)dt$ solves (\ref{e4.3}) }

We have noted  that $U^{i+1}=\int^{\infty}_{0}u^{i+1}(t)dt$ solves formally the approximating equation (\ref{e4.3}). Now we prove
this rigorously and show that $U^{i+1}$ is uniformally bounded in $H^{\frac{\alpha}{2}}$.
\begin{lem}\label{lem4.4}
Let $u^{i+1}$ be the solution of (\ref{e4.4}) obtained in Lemma \ref{lem4.2},
then $\int^{\infty}_{0}u^{i+1}(t)dt\in L^{2}(R)$ solves uniquely (\ref{e4.3}).
Moreover,
$$
\int^{\infty}_{0}u^{i+1}(t)dt=U^{i+1}.
$$
\end{lem}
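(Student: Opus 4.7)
The plan is to realize rigorously the formal calculation already sketched in Subsection~4.1: integrate the linear parabolic equation (\ref{e4.4}) in time so as to recover the stationary equation (\ref{e4.3}), use the decay of Lemma~\ref{lem4.3} to make the improper integral $V:=\int_0^\infty u^{i+1}(t)\,dt$ a well-defined $L^2$ function, and then invoke the $H^{\alpha/2}$ uniqueness proved in Lemma~\ref{lem4.1} to identify $V$ with $U^{i+1}$.

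First I would verify integrability. Since $\alpha\in(1,\tfrac{3}{2+\epsilon})$, the exponent $\tfrac{3}{2\alpha}-\tfrac{\epsilon}{2}$ is strictly greater than $1$, so the integral Minkowski inequality combined with the pointwise-in-time decay of Lemma~\ref{lem4.3} yields
$$
\Big\|\int_0^\infty u^{i+1}(t)\,dt\Big\|_{L^2}\le \int_0^\infty\|u^{i+1}(t)\|_{L^2}\,dt\le C(\alpha,\epsilon)\,\|f\|_X,
$$
so $V\in L^2(R)$; the same tail estimate shows that $\int_0^T u^{i+1}\,ds\to V$ strongly in $L^2$ as $T\to\infty$.

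Next I would derive the stationary equation for $V$. Testing the weak formulation of (\ref{e4.4}) against a time-independent $\psi\in C_0^\infty(R)$ on $[0,T]$, and using the weak $L^2$-continuity of $u^{i+1}$ in $t$, gives
\begin{equation*}
\langle u^{i+1}(T),\psi\rangle-\langle f,\psi\rangle-\tfrac12\int_0^T\langle U^i u^{i+1},\psi_x\rangle\,ds+\int_0^T\langle \Lambda^{\alpha/2}u^{i+1},\Lambda^{\alpha/2}\psi\rangle\,ds=0.
\end{equation*}
Letting $T\to\infty$, the decay of Lemma~\ref{lem4.3} sends $\langle u^{i+1}(T),\psi\rangle\to 0$, while Fubini plus the strong $L^2$-convergence of $\int_0^T u^{i+1}\,ds$ to $V$ (tested against $U^i\psi_x\in L^2$ and $\Lambda^\alpha\psi\in L^2$) converts the two time integrals into $\langle U^iV,\psi_x\rangle$ and $\langle \Lambda^{\alpha/2}V,\Lambda^{\alpha/2}\psi\rangle$. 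This is precisely the weak form of (\ref{e4.3}) with $U^{i+1}$ replaced by $V$.

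To apply the uniqueness in Lemma~\ref{lem4.1} I still need $V\in H^{\alpha/2}$, not merely $L^2$. Since $V$ now solves the same weak equation with the given $U^i\in H^{\alpha/2}$, I would repeat for $V$ the commutator energy estimate that produced (\ref{e4.5}) in Lemma~\ref{lem4.1}; after a standard mollification this is legitimate and yields $\|\Lambda^{\alpha/2}V\|_{L^2}\le\tfrac{C}{2}\epsilon^{-1}\|f\|_X$. Hence $V$ and $U^{i+1}$ are two $H^{\alpha/2}$ weak solutions of the same linear equation (\ref{e4.3}) for the common coefficient $U^i$, and Lemma~\ref{lem4.1} forces $V=U^{i+1}$. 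The main obstacle I anticipate is the second step: carefully justifying that the Bochner integral $\int_0^T u^{i+1}\,ds$ commutes with the spatial dualities involving $\psi_x$ and $\Lambda^{\alpha/2}\psi$, and that nothing is lost when $T\to\infty$. The cleanest route is to carry out all manipulations first on the Galerkin approximants $u^{i+1}_n$ of Lemma~\ref{lem4.2}, where every pairing is classical and Fubini is immediate, and only at the end pass to the weak limit using the uniform estimates (\ref{e4.7}) and the decay of Lemma~\ref{lem4.3}.
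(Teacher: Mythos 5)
Your proposal is correct and follows essentially the same route as the paper: define the time integral (the paper uses the partial integrals $V^{i+1}_{n}=\int_{0}^{n}u^{i+1}(t)\,dt$), use the decay of Lemma \ref{lem4.3} with exponent $\frac{3}{2\alpha}-\frac{\epsilon}{2}>1$ to get convergence in $L^{2}$, verify that the limit is a weak solution of (\ref{e4.3}), and conclude $V^{i+1}=U^{i+1}$ from the uniqueness in Lemma \ref{lem4.1}. The only difference is that you spell out the passage to the limit in the weak formulation and the $H^{\frac{\alpha}{2}}$ membership needed for the uniqueness step, points the paper dismisses as ``easy to check.''
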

\begin{proof}
For each $i$, define the sequence $\{V^{i+1}_{n}\}$ by
$$
V^{i+1}_{n}=\int^{n}_{0}u^{i+1}(t)dt.
$$
The Minkowski's inequality and Lemma \ref{lem4.3} yield
$$
\|V^{i+1}_{n}\|_{L^{2}}\leq\int^{n}_{0}\|u^{i+1}(t)\|_{L^{2}}dt\leq\frac{24-4\epsilon\alpha}{(3-2\alpha-\epsilon\alpha)\epsilon}\|f\|_{X}.
$$
Similarly, by applying the Minkowski's inequality again, we have
$$
\|V^{i+1}_{n+1}-V^{i+1}_{n}\|_{L^{2}}\leq \int_{n}^{n+1}\|u^{i+1}(t)\|_{L^{2}}dt.
$$
It follows from Lemma \ref{lem4.3} that
$$
\int_{n}^{n+1}\|u^{i+1}(t)\|_{L^{2}}dt\leq \frac{12-2\alpha\epsilon}{\alpha\epsilon}\|f\|_{X}\int_{n}^{n+1}(1+t)^{-\frac{3}{2\alpha}+\frac{\epsilon}{2}}dt \rightarrow 0,  ~~  as~~n\rightarrow \infty.
$$
Thus $\{V^{i+1}_{n}\}$ is a Cauchy consequence in $L^{2}$ and
$$
V^{i+1}_{n}=\int^{n}_{0}u^{i+1}(t)dt\rightarrow V^{i+1} \quad in~~L^{2},  ~~as~~n\rightarrow \infty.
$$
Denote $V^{i+1}=\int^{\infty}_{0}u^{i+1}(t)dt$, it's easy to check that $V^{i+1}$ is a weak solution of (\ref{e4.3}). The uniqueness in Lemma \ref{lem4.1} shows that $V^{i+1}=U^{i+1}$.
\end{proof}

\begin{lem}\label{lem4.5}
Let $U^{i+1}$ be the solution of (\ref{e4.3}) obtained in Lemma \ref{lem4.1} and the assumptions in Theorem \ref{thm1.4} hold,
then
$$
\|U^{i+1}\|_{H^{\frac{\alpha}{2}}}\leq C\epsilon^{-1}\|f\|_{X}.
$$
\end{lem}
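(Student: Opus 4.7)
The plan is to combine two ingredients that are already in hand: the $\dot H^{\alpha/2}$ bound of Lemma \ref{lem4.1} and the $L^2$ decay estimate of Lemma \ref{lem4.3}, using the integral representation from Lemma \ref{lem4.4}. Since $U^{i+1}=\int_0^{\infty}u^{i+1}(t)\,dt$, Minkowski's integral inequality reduces the $L^2$ control of $U^{i+1}$ to time-integrating the decay of $\|u^{i+1}(t)\|_{L^2}$, which is exactly what the restriction $\alpha\in(1,\tfrac{3}{2+\epsilon})$ was arranged to allow.

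First I would apply Minkowski to get
\begin{equation*}
\|U^{i+1}\|_{L^2}\leq \int_0^{\infty}\|u^{i+1}(t)\|_{L^2}\,dt,
\end{equation*}
and then insert the pointwise-in-$t$ decay supplied by Lemma \ref{lem4.3}, namely $\|u^{i+1}(t)\|_{L^2}\leq \sqrt{(12-2\alpha\epsilon)/(\alpha\epsilon)}\,\|f\|_X(1+t)^{-(\frac{3}{2\alpha}-\frac{\epsilon}{2})}$. The hypothesis $\alpha<\tfrac{3}{2+\epsilon}$ is equivalent to $\tfrac{3}{2\alpha}-\tfrac{\epsilon}{2}>1$, so the resulting time integral converges and evaluates to $\tfrac{2\alpha}{3-2\alpha-\alpha\epsilon}$. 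This yields
\begin{equation*}
\|U^{i+1}\|_{L^2}\leq \frac{2\alpha}{3-2\alpha-\alpha\epsilon}\sqrt{\frac{12-2\alpha\epsilon}{\alpha\epsilon}}\,\|f\|_X,
\end{equation*}
and a direct computation shows this upper bound is at most $\tfrac{C}{2}\epsilon^{-1}\|f\|_X$ as soon as $C\geq \tfrac{4\sqrt{\alpha\epsilon(12-2\alpha\epsilon)}}{3-2\alpha-\alpha\epsilon}$, which is precisely the second entry in the definition of $C$ from Lemma \ref{lem4.1}.

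To conclude I would add this to the $\dot H^{\alpha/2}$ bound $\|\Lambda^{\alpha/2}U^{i+1}\|_{L^2}\leq \tfrac{C}{2}\epsilon^{-1}\|f\|_X$ (recorded in (\ref{e4.5}), and valid once $C\geq 3\epsilon$, the first entry in the definition of $C$). Since $\|U^{i+1}\|_{H^{\alpha/2}}\leq \|U^{i+1}\|_{L^2}+\|\Lambda^{\alpha/2}U^{i+1}\|_{L^2}$ (up to the Sobolev embedding constant normalized to $1$ at the start of Section~4), the two halves sum to the desired estimate $\|U^{i+1}\|_{H^{\alpha/2}}\leq C\epsilon^{-1}\|f\|_X$.

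There is no real analytic obstacle left: the Fourier-splitting/Duhamel argument in Lemma \ref{lem4.3} already did the heavy lifting, and the choice of $\tfrac{3}{2+\epsilon}$ as the ceiling on $\alpha$ was engineered so that the exponent of the decay exceeds $1$ and the time integral converges. The only thing to be careful about is constant bookkeeping: one needs to verify that the single constant $C=\max\{3\epsilon,\tfrac{4\sqrt{\alpha\epsilon(12-2\alpha\epsilon)}}{3-2\alpha-\alpha\epsilon}\}$ simultaneously dominates both contributions, and to check that the smallness threshold $\|f\|_X\leq C(\alpha,\epsilon)$ needed in Lemmas \ref{lem4.1}, \ref{lem4.2} and \ref{lem4.3} can be chosen uniformly so that all three bounds hold at once; this produces a single admissible $C(\alpha,\epsilon)$ and closes the induction hypothesis $\|U^{i}\|_{H^{\alpha/2}}\leq C\epsilon^{-1}\|f\|_X \Rightarrow \|U^{i+1}\|_{H^{\alpha/2}}\leq C\epsilon^{-1}\|f\|_X$ needed to iterate the scheme of Step~1.
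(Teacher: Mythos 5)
Your proposal is correct and follows essentially the same route as the paper: identify $U^{i+1}=\int_0^\infty u^{i+1}(t)\,dt$ via Lemma \ref{lem4.4}, bound $\|U^{i+1}\|_{L^2}$ by Minkowski plus the decay of Lemma \ref{lem4.3} (the exponent exceeding $1$ because $\alpha<\tfrac{3}{2+\epsilon}$), and add the $\dot H^{\alpha/2}$ bound (\ref{e4.5}). Your explicit verification that each half is dominated by $\tfrac{C}{2}\epsilon^{-1}\|f\|_X$ under the two entries of $C=\max\{3\epsilon,\tfrac{4\sqrt{\alpha\epsilon(12-2\alpha\epsilon)}}{3-2\alpha-\alpha\epsilon}\}$ is just a more detailed account of the constant bookkeeping the paper leaves implicit.
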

\begin{proof}
From Lemma \ref{lem4.3}, Lemma \ref{lem4.4} and (\ref{e4.5}), we can deduce
\begin{equation}
\begin{aligned}
 \|U^{i+1}\|_{H^{\frac{\alpha}{2}}}
 &=\|U^{i+1}\|_{L^{2}}+\|\Lambda^{\frac{\alpha}{2}}U^{i+1}\|_{L^{2}} \nonumber\\
 &\leq \int^{\infty}_{0}\|u^{i+1}(t)\|_{L^{2}}dt+\frac{C}{2}\epsilon^{-1}\|f\|_{X} \nonumber\\
 &\leq \sqrt{\frac{12-2\alpha\epsilon}{\alpha\epsilon}}\|f\|_{X}\int^{\infty}_{0}(1+t)^{-\frac{3}{2\alpha}+\frac{\epsilon}{2}}dt+\frac{C}{2}\varepsilon^{-1}\|f\|_{X}\nonumber\\
 &\leq (\sqrt{\frac{12-2\alpha\epsilon}{\alpha\epsilon}}\frac{2\alpha}{3-2\alpha-\epsilon\alpha}+\frac{C}{2}\epsilon^{-1})\|f\|_{X}\leq C\varepsilon^{-1}\|f\|_{X}.
\end{aligned}
\end{equation}
\end{proof}
~\\
{\bf Step 4~~ Proving $U^{i+1}\rightarrow U$ is the unique weak solution of (\ref{e12})}\\
~\\
In this step, we would finish the proof of Theorem \ref{thm1.4} by showing that the approximating solutions $U^{i+1}$ to (\ref{e4.3}) converge to a limit function $U$, which solves uniquely (\ref{e12}) with $\|U\|_{H^{\frac{\alpha}{2}}}\leq C\epsilon^{-1}\|f\|_{X}$. Starting with $U^{0}$, we can solve (\ref{e4.3}) recursively by Lemma \ref{lem4.1} and obtain a sequence $\{U^{i}\}_{i=0}^{\infty}$, which satisfies $\|U^{i}\|_{H^{\frac{\alpha}{2}}}\leq C\epsilon^{-1}\|f\|_{X}$ due to Lemma \ref{lem4.5}. Now we first show the sequence $\{U^{i}\}$ is Cauchy sequence in $\dot{H}^{\frac{\alpha}{2}}$. It's obvious that there holds
\begin{equation}\label{e4.19}
\frac{1}{2}\partial_{x}(U^{i}U^{i+1})+\Lambda^{\alpha}U^{i+1}=f
\end{equation}
and
\begin{equation}\label{e4.20}
\frac{1}{2}\partial_{x}(U^{i-1}U^{i})+\Lambda^{\alpha}U^{i}=f.
\end{equation}
Let $Y^{i+1}=U^{i+1}-U^{i}$, then (\ref{e4.19}) and (\ref{e4.20}) imply that $Y^{i+1}$ satisfies
\begin{equation}\label{e4.21}
\frac{1}{2}\partial_{x}(U^{i}Y^{i+1})+\frac{1}{2}\partial_{x}(U^{i}Y^{i})+\Lambda^{\alpha}Y^{i+1}=0.
\end{equation}
Since $Y^{i+1}\in H^{\frac{\alpha}{2}}(R)$, then multiplying the equation (\ref{e4.21}) by $Y^{i+1}$ and integrating on $R$ yield
$$
\frac{1}{2}\int_{R}\partial_{x}(U^{i}Y^{i+1})Y^{i+1}dx+\frac{1}{2}\int_{R}\partial_{x}(U^{i}Y^{i})Y^{i+1}dx+\int_{R}|\Lambda^{\frac{\alpha}{2}}Y^{i+1}|^{2}dx=0,
$$
which implies
\begin{align}\label{e4.22}
 \|\Lambda^{\frac{\alpha}{2}}Y^{i+1}\|_{L^{2}}^{2}
 &\leq \frac{1}{2}(|\int_{R}\partial_{x}(U^{i}Y^{i+1})Y^{i+1}dx|+|\int_{R}\partial_{x}(U^{i}Y^{i})Y^{i+1}dx|)  \nonumber\\
 &\leq \frac{1}{2}\int_{R}\Lambda^{1-\frac{\alpha}{2}}(U^{i}Y^{i+1})\Lambda^{\frac{\alpha}{2}}Y^{i+1}dx+\frac{1}{2}\int_{R}\Lambda^{1-\frac{\alpha}{2}}(U^{i}Y^{i})
 \Lambda^{\frac{\alpha}{2}}Y^{i+1}dx \nonumber\\
 &\leq \frac{1}{2}\|\Lambda^{1-\frac{\alpha}{2}}(U^{i}Y^{i+1})\|_{L^{2}}\|\Lambda^{\frac{\alpha}{2}}Y^{i+1}\|_{L^{2}}+\frac{1}{2}
 \|\Lambda^{1-\frac{\alpha}{2}}(U^{i}Y^{i})\|_{L^{2}}\|\Lambda^{\frac{\alpha}{2}}Y^{i+1}\|_{L^{2}}.
\end{align}
On the other hand, we can get
\begin{align}\label{e4.23}
\|\Lambda^{1-\frac{\alpha}{2}}(U^{i}Y^{i})\|_{L^{2}}
 &\leq \|\Lambda^{1-\frac{\alpha}{2}}U^{i}\|_{L^{2}}\|Y^{i}\|_{L^{\infty}}+\|U^{i}\|_{L^{\frac{1}{\alpha-1}}}
\|\Lambda^{1-\frac{\alpha}{2}}Y^{i}\|_{L^{\frac{2}{3-2\alpha}}}  \nonumber\\
 &\leq (\|\Lambda^{1-\frac{\alpha}{2}}U^{i}\|_{L^{2}}+\|U^{i}\|_{L^{\frac{1}{\alpha-1}}})\|\Lambda^{\frac{\alpha}{2}}Y^{i}\|_{L^{2}} \nonumber\\
 &\leq 2\|U^{i}\|_{H^{\frac{\alpha}{2}}}\|\Lambda^{\frac{\alpha}{2}}Y^{i}\|_{L^{2}},
\end{align}
and
\begin{equation}\label{e4.24}
\|\Lambda^{1-\frac{\alpha}{2}}(U^{i}Y^{i+1})\|_{L^{2}}\leq 2\|U^{i}\|_{H^{\frac{\alpha}{2}}}\|\Lambda^{\frac{\alpha}{2}}Y^{i+1}\|_{L^{2}}.
\end{equation}
Choosing $C(\alpha,\epsilon)$ and $\|f\|_{X}\leq C(\alpha,\epsilon)$, such that
 $C\epsilon^{-1}\|f\|_{X}\leq \frac{1}{3}$, then we have $\|U^{i}\|_{H^{\frac{\alpha}{2}}}\leq C\epsilon^{-1}\|f\|_{X}\leq \frac{1}{3}$. Combining $\|U^{i}\|_{H^{\frac{\alpha}{2}}}\leq \frac{1}{3}$ with (\ref{e4.22})-(\ref{e4.24}), we obtain
\begin{equation}\label{e4.25}
\|\Lambda^{\frac{\alpha}{2}}Y^{i+1}\|_{L^{2}}\leq \frac{1}{2}\|\Lambda^{\frac{\alpha}{2}}Y^{i}\|_{L^{2}}.
\end{equation}
Applying the estimate (\ref{e4.25}) recursively gives
$$
\|\Lambda^{\frac{\alpha}{2}}Y^{i+1}\|_{L^{2}}\leq (\frac{1}{2})^{i}\|\Lambda^{\frac{\alpha}{2}}Y^{1}\|_{L^{2}},
$$
then $Y^{i}\rightarrow 0$ in $\dot{H}^{\frac{\alpha}{2}}(R)$ as $i\rightarrow \infty$. Hence the sequence $\{U^{i}\}$ is Cauchy sequence in $\dot{H}^{\frac{\alpha}{2}}$ and there exists a limit $U\in\dot{H}^{\frac{\alpha}{2}}(R)$ such that
\begin{equation}\label{e4.26}
U^{i}\rightarrow U \quad in ~~~~\dot{H}^{\frac{\alpha}{2}}(R).
\end{equation}
In addition, it follows from Lemma \ref{lem4.5} that $\|U\|_{H^{\frac{\alpha}{2}}}\leq C\epsilon^{-1}\|f\|_{X}$.

Now we are in the position to show that $U$ is a unique solution to (\ref{e12}). By Lemma \ref{lem4.1}, we have
$$
-\frac{1}{2}\langle U^{i}U^{i+1}, \psi_{x}\rangle+\langle \Lambda^{\frac{\alpha}{2}}U^{i+1}, \Lambda^{\frac{\alpha}{2}}\psi\rangle=\langle f, \psi\rangle, \quad  \forall\psi\in C^{\infty}_{0}(R).
$$
It follows from (\ref{e4.26}) immediately that
\begin{equation}\label{e4.27}
\langle \Lambda^{\frac{\alpha}{2}}U^{i+1}, \Lambda^{\frac{\alpha}{2}}\psi\rangle\rightarrow \langle \Lambda^{\frac{\alpha}{2}}U, \Lambda^{\frac{\alpha}{2}}\psi\rangle.
\end{equation}
For the nonlinear term, we have
\begin{align}\label{e4.28}
~&
\langle U^{i}U^{i+1}, \psi_{x}\rangle-\langle U^{2}, \psi_{x}\rangle \nonumber\\
 &= \langle (U^{i}-U)U^{i+1}+(U^{i+1}-U)U, \psi_{x}\rangle \nonumber\\
 &\leq \|U^{i}-U\|_{L^{\infty}}\|U^{i+1}\|_{L^{2}}\|\psi_{x}\|_{L^{2}}+\|U^{i+1}-U\|_{L^{\infty}}\|U\|_{L^{2}}\|\psi_{x}\|_{L^{2}} \nonumber\\
 &\leq \|U^{i}-U\|_{\dot{H}^{\frac{\alpha}{2}}}\|U^{i+1}\|_{L^{2}}\|\psi_{x}\|_{L^{2}}+\|U^{i+1}-U\|_{\dot{H}^{\frac{\alpha}{2}}}\|U\|_{L^{2}}\|\psi_{x}\|_{L^{2}}\nonumber\\
 &\rightarrow 0,
\end{align}
where (\ref{e4.26}) and the boundedness of $U$ and $U^{i+1}$ are used. Thus, (\ref{e4.27}) and (\ref{e4.28}) imply that $U$ is the weak solution of (\ref{e12}).

Finally, we show that this solution $U$ is unique among all solutions satisfying
$$
\|U\|_{H^{\frac{\alpha}{2}}}\leq C\epsilon^{-1}\|f\|_{X}.
$$
Assume $\widetilde{U}$ is another weak solution of (\ref{e12}) with $\|\widetilde{U}\|_{H^{\frac{\alpha}{2}}}\leq C\epsilon^{-1}\|f\|_{X}$, then the difference $Y=U-\widetilde{U}$ satisfies
\begin{equation}\label{e4.29}
\partial_{x}(UY)+\Lambda^{\alpha}Y=0
\end{equation}
Applying the similar procedure above on (\ref{e4.29}), we can get
$$
\|\Lambda^{\frac{\alpha}{2}}Y\|_{L^{2}}\leq \frac{1}{2}\|\Lambda^{\frac{\alpha}{2}}Y\|_{L^{2}},
$$
which means the solution is unique.
\end{proof}

\section{Stability of the steady-state solution}
In this section, we will discuss the stability of the steady-state solution obtained in Theorem \ref{thm1.4}. Since we can
choose $C(\alpha,\epsilon)$ and $\|f\|_{X}\leq C(\alpha,\epsilon)$,
such that
 $C\epsilon^{-1}\|f\|_{X}\leq \frac{1}{3}$. Then we have $\|U^{i}\|_{H^{\frac{\alpha}{2}}}\leq C\epsilon^{-1}\|f\|_{X}\leq \frac{1}{3}$, which would be used in the following.
\subsection{The proof of Theorem \ref{thm1.5}}
\begin{proof}
We assume that (\ref{e1.6}) holds provided the force $f\in \dot{H}^{-\frac{\alpha}{2}}(R)\cap L^{2}(R)$, which would be proved in next subsection.
In the following, we mainly establish
$$
\lim_{t\rightarrow \infty}\|w\|_{L^{2}}=\lim_{t\rightarrow \infty}\|u(t)-U\|_{L^{2}}=0,
$$
where $w$ is the difference between $u(t,x)$ and $U(x)$ solving
\begin{equation}\label{e5.1}
\left\{ \begin{array}{ll}
 w_{t}+ww_{x}+(Uw)_{x}+\Lambda^{\alpha}w=0,  \\
w(0,x)=u(0,x)-U=\theta.
 \end{array} \right.
\end{equation}

Multiplying both sides of (\ref{e5.1}) by $w$ and integrating on $R$ give
$$
\frac{1}{2}\frac{d}{dt}\int_{R}|w|^{2}dx+\int_{R}|\Lambda^{\frac{\alpha}{2}}w|^{2}dx=-\int_{R}(Uw)_{x}wdx
$$
and choosing $ C(\alpha, \varepsilon)$ small such that $\|U\|_{H^{\frac{\alpha}{2}}}\leq \frac{1}{3}$, then
\begin{equation}
\begin{aligned}
|\int_{R}(Uw)_{x}wdx|
 &\leq |\int_{R} \Lambda^{1-\frac{\alpha}{2}}(Uw)\Lambda^{\frac{\alpha}{2}}wdx| \nonumber\\
 &\leq \|\Lambda^{1-\frac{\alpha}{2}}(Uw)\|_{L^{2}} \|\Lambda^{\frac{\alpha}{2}}w\|_{L^{2}}  \nonumber\\
 &\leq (\|\Lambda^{1-\frac{\alpha}{2}}U\|_{L^{2}}\|w\|_{L^{\infty}}+\|U\|_{L^{\frac{1}{\alpha-1}}}
\|\Lambda^{1-\frac{\alpha}{2}}w\|_{L^{\frac{2}{3-2\alpha}}})\|\Lambda^{\frac{\alpha}{2}}w\|_{L^{2}}  \nonumber\\
 &\leq (\|\Lambda^{1-\frac{\alpha}{2}}U\|_{L^{2}}+\|U\|_{L^{\frac{1}{\alpha-1}}})\|\Lambda^{\frac{\alpha}{2}}w\|_{L^{2}}^{2} \nonumber\\
 &\leq 2\|U\|_{H^{\frac{\alpha}{2}}}\|\Lambda^{\frac{\alpha}{2}}w\|_{L^{2}}^{2}\leq \frac{2}{3}\|\Lambda^{\frac{\alpha}{2}}w\|_{L^{2}}^{2}.
\end{aligned}
\end{equation}
Therefore,
$$
\frac{d}{dt}\int_{R}|w|^{2}dx+\frac{2}{3}\int_{R}|\Lambda^{\frac{\alpha}{2}}w|^{2}dx\leq0.
$$
Integrating with respect to time from $0$ to $t$, we have
  \begin{equation}\label{e5.2}
 \int_{R}|w|^{2}dx+\frac{2}{3}\int_{0}^{t}\int_{R}|\Lambda^{\frac{\alpha}{2}}w|^{2}dxds\leq \|\theta\|_{L^{2}}^{2}.
 \end{equation}

 On the other hand, the fact $\|U\|_{L^{\infty}}\leq\|U\|_{H^{\frac{\alpha}{2}}}\leq \frac{1}{3}$ and (\ref{e1.6}) imply that $w=u-U$ is bounded in $L^{\infty}((0,T)\times R)$. At the same time, (\ref{e5.2}) implies that $w$ is also bounded in $L^{\infty}((0,T);L^{2}(R))$. Multiplying both sides of (\ref{e5.1}) by $e^{-2(t-s)\Lambda^{\alpha}}w(s)$ and integrating over the whole space, we have
\begin{equation}
\begin{aligned}
~
&\frac{1}{2}\frac{d}{ds}\int_{R}|e^{-(t-s)\Lambda^{\alpha}} w(s)|^{2}dx
+\frac{1}{2}\int_{R}e^{-(t-s)\Lambda^{\alpha}} (w^{2})_{x}e^{-(t-s)\Lambda^{\alpha}} w(s)dx  \nonumber\\
&+ \int_{R}e^{-(t-s)\Lambda^{\alpha}} (Uw)_{x}e^{-(t-s)\Lambda^{\alpha}} w(s)dx=0.
\end{aligned}
\end{equation}
Integrating over the time interval $[s,t]$ yields
  \begin{equation}
\begin{aligned}
\| w(t)\|_{L^{2}}^{2}
&\leq \|e^{-(t-s)\Lambda^{\alpha}} w(s)\|_{L^{2}}^{2}  \nonumber\\
&+\int_{s}^{t}|\langle (ww)_{x},e^{-2(t-\tau)\Lambda^{\alpha}} w(\tau) \rangle|d\tau +2\int_{s}^{t}|\langle (Uw)_{x},e^{-2(t-\tau)\Lambda^{\alpha}} w(\tau) \rangle|d\tau \nonumber\\
&\leq \|e^{-(t-s)\Lambda^{\alpha}} w(s)\|_{L^{2}}^{2}  \nonumber\\
&+\int_{s}^{t}\|\Lambda^{1-\frac{\alpha}{2}}(ww)\|_{L^{2}}\|\Lambda^{\frac{\alpha}{2}}w\|_{L^{2}} d\tau+2\int_{s}^{t}\|\Lambda^{1-\frac{\alpha}{2}}(Uw)\|_{L^{2}}\|\Lambda^{\frac{\alpha}{2}}w\|_{L^{2}} d\tau  \nonumber\\
&\leq \|e^{-(t-s)\Lambda^{\alpha}} w(s)\|_{L^{2}}^{2} \nonumber\\
&+\int_{s}^{t}(\|\Lambda^{1-\frac{\alpha}{2}}w\|_{L^{2}}\|w\|_{L^{\infty}}+\|w\|_{L^{\frac{1}{\alpha-1}}}\|w\|_{L^{\frac{2}{3-2\alpha}}})\|\Lambda^{\frac{\alpha}{2}}w\|_{L^{2}}d\tau  \nonumber\\
&+2\int_{s}^{t}(\|\Lambda^{1-\frac{\alpha}{2}}U\|_{L^{2}}\|w\|_{L^{\infty}}+\|U\|_{L^{\frac{1}{\alpha-1}}}\|w\|_{L^{\frac{2}{3-2\alpha}}})\|\Lambda^{\frac{\alpha}{2}}w\|_{L^{2}}d\tau.
\end{aligned}
\end{equation}
Sobolev embedding theorem and the interpolation theorem imply
  \begin{equation}
\begin{aligned}
\| w(t)\|_{L^{2}}^{2}
&\leq \|e^{-(t-s)\Lambda^{\alpha}} w(s)\|_{L^{2}}^{2} \nonumber\\
&+\int_{s}^{t}(\|w\|_{L^{2}}+\|\Lambda^{\frac{\alpha}{2}}w\|_{L^{2}})\|w\|_{L^{\infty}}\|\Lambda^{\frac{\alpha}{2}}w\|_{L^{2}}+(\|w\|_{L^{2}}+\|w\|_{L^{\infty}})\|\Lambda^{\frac{\alpha}{2}}w\|_{L^{2}}^{2}d\tau  \nonumber\\
&+2\int_{s}^{t}(\|\Lambda^{1-\frac{\alpha}{2}}U\|_{L^{2}}+\|U\|_{L^{\frac{1}{\alpha-1}}})\|\Lambda^{\frac{\alpha}{2}}w\|_{L^{2}}^{2}d\tau \nonumber\\
&\leq \|e^{-(t-s)\Lambda^{\alpha}} w(s)\|_{L^{2}}^{2}+ C\int_{s}^{t}\|\Lambda^{\frac{\alpha}{2}}w\|_{L^{2}}^{2}d\tau+4\|U\|_{H^{\frac{\alpha}{2}}}\int_{s}^{t}\|\Lambda^{\frac{\alpha}{2}}w\|_{L^{2}}^{2}d\tau \nonumber\\
&\leq \|e^{-(t-s)\Lambda^{\alpha}} w(s)\|_{L^{2}}^{2}+(C+\frac{4}{3}) \int_{s}^{t} \|\Lambda^{\frac{\alpha}{2}}w\|_{L^{2}}^{2} d\tau,
\end{aligned}
\end{equation}
where $C=2(\|w\|_{L^{\infty}((0,T)\times R)}+\|w\|_{L^{\infty}(0,T;L^{2}(R))})$.

Thus there holds that
 $$
 \lim_{t\rightarrow \infty}\| w(t)\|_{L^{2}}^{2}\leq (C+\frac{4}{3}) \int_{s}^{\infty} \|\Lambda^{\frac{\alpha}{2}}w\|_{L^{2}}^{2} d\tau.
 $$
Estimate (\ref{e5.2}) implies that the right-hand side of the above inequality tends to $0$ as $s\rightarrow \infty$. Thus we have
$$
\lim_{t\rightarrow\infty}\|w(t)\|_{L^{2}}^{2}=0.
$$
\end{proof}

\subsection{The regularity of viscosity weak solution $u(t,x)$ to (\ref{e11})}
In this subsection, we improve the regularity of viscosity weak solution $u(t,x)$ to (\ref{e11}). In the case of the surface Quasi-geostrophic equation in $R^{2}$, Cheskidov and Dai have derived a level set energy inequality in \cite{14} and \cite{15}. We use the similar method to sketch the proof for Burgers equation in $R$.
\begin{lem}
Let $u(t,x)$ be a viscosity weak solution to (\ref{e11}) on $[0,T]$ with $u_{0}\in L^{2}$. Then for every $\lambda\in R$ there holds that
\begin{equation}\label{e6.1}
\|u_{\lambda}(t_{2})\|_{L^{2}}^{2}+2\int_{t_{1}}^{t_{2}}\|\Lambda^{\frac{\alpha}{2}}u_{\lambda}\|_{L^{2}}^{2}dt\leq \|u_{\lambda}(t_{1})\|_{L^{2}}^{2}+2\int^{t_{2}}_{t_{1}}\int_{R}fu_{\lambda}dxdt,
\end{equation}
where $0\leq t_{1}\leq t_{2}\leq T$ and $u_{\lambda}=(u-\lambda)_{+}$ or $u_{\lambda}=(u+\lambda)_{-}$.
\end{lem}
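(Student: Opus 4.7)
The plan is to derive the level-set energy inequality first for the viscous regularizations $u^{\varepsilon}$ satisfying (\ref{e30}), and then pass to the limit $\varepsilon\to 0$ using the strong convergence $u^{\varepsilon}\to u$ produced in Lemma \ref{lem3.3} and Corollary \ref{cor3.4}. For $\varepsilon>0$ the term $\varepsilon u^{\varepsilon}_{xx}$ renders $u^{\varepsilon}$ smooth enough to justify all pointwise manipulations with truncations. I treat $u_{\lambda}=(u-\lambda)_{+}$; the case $u_{\lambda}=(u+\lambda)_{-}$ is symmetric after replacing $v$ by the corresponding negative part in the computation below.

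Multiplying (\ref{e30}) by $u^{\varepsilon}_{\lambda}:=(u^{\varepsilon}-\lambda)_{+}$ and integrating in $x$, the time term yields $\tfrac{1}{2}\tfrac{d}{dt}\|u^{\varepsilon}_{\lambda}\|_{L^{2}}^{2}$ since $\partial_{t}u^{\varepsilon}=\partial_{t}u^{\varepsilon}_{\lambda}$ on $\{u^{\varepsilon}>\lambda\}$ and $u^{\varepsilon}_{\lambda}\equiv0$ off this set. The viscous contribution $-\varepsilon\int u^{\varepsilon}_{xx}u^{\varepsilon}_{\lambda}\,dx=\varepsilon\int|(u^{\varepsilon}_{\lambda})_{x}|^{2}\,dx\geq 0$ and can simply be dropped. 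For the Burgers term, set $v:=u^{\varepsilon}_{\lambda}$: on $\{u^{\varepsilon}>\lambda\}$ one has $u^{\varepsilon}=v+\lambda$ and $u^{\varepsilon}_{x}=v_{x}$, while $v\equiv 0$ elsewhere, so
$$
\int_{\R}u^{\varepsilon}u^{\varepsilon}_{x}\,v\,dx=\int_{\R}\left(v^{2}v_{x}+\lambda v v_{x}\right)dx=\int_{\R}\partial_{x}\!\left(\tfrac{v^{3}}{3}+\tfrac{\lambda}{2}v^{2}\right)dx=0.
$$
For the fractional dissipation, decompose $u^{\varepsilon}-\lambda=v-w$ with $w:=(u^{\varepsilon}-\lambda)_{-}\geq 0$, so $v,w\geq 0$ and $vw\equiv0$. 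Then
$$
\int_{\R}(\Lambda^{\alpha}u^{\varepsilon})\,v\,dx=\|\Lambda^{\frac{\alpha}{2}}v\|_{L^{2}}^{2}-\langle\Lambda^{\frac{\alpha}{2}}v,\Lambda^{\frac{\alpha}{2}}w\rangle.
$$
Using the singular-integral representation of $\Lambda^{\alpha}$, at any $x$ with $v(x)>0$ we have $w(x)=0$ and $\Lambda^{\alpha}w(x)=-C\int\frac{w(y)}{|x-y|^{1+\alpha}}\,dy\leq 0$; pairing with $v\geq 0$ yields $\langle\Lambda^{\frac{\alpha}{2}}v,\Lambda^{\frac{\alpha}{2}}w\rangle\leq 0$, hence $\int(\Lambda^{\alpha}u^{\varepsilon})u^{\varepsilon}_{\lambda}\,dx\geq\|\Lambda^{\frac{\alpha}{2}}u^{\varepsilon}_{\lambda}\|_{L^{2}}^{2}$. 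Integrating in time over $[t_{1},t_{2}]$ then gives (\ref{e6.1}) for $u^{\varepsilon}$.

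To pass to the limit $\varepsilon\to 0$, the strong $L^{2}_{\mathrm{loc}}$ convergence of $u^{\varepsilon}$ and the $1$-Lipschitz property of $s\mapsto(s-\lambda)_{+}$ give $u^{\varepsilon}_{\lambda}\to u_{\lambda}$ strongly in $L^{2}_{\mathrm{loc}}$ and weakly in $L^{2}(0,T;\dot{H}^{\frac{\alpha}{2}}(\R))$. Weak lower semicontinuity of $\|\cdot\|_{L^{2}}^{2}$ and of $\int_{t_{1}}^{t_{2}}\|\Lambda^{\frac{\alpha}{2}}\cdot\|_{L^{2}}^{2}\,dt$ handles the left-hand side at almost every $t_{1}$ and $t_{2}$; the right-hand side is linear in $u^{\varepsilon}_{\lambda}$ and converges directly. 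The initial/intermediate time values can be recovered for every $t_{1}$ by the weak continuity $u\in C_{w}([0,T];L^{2})$ together with a further lower semicontinuity argument.

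The main obstacle I expect is the positivity step for the fractional dissipation: the usual Córdoba–Córdoba inequality is comfortable with functions decaying at infinity, whereas $u^{\varepsilon}-\lambda$ need not vanish at infinity, so one is forced into the disjoint-support split $v-w$ above rather than a direct chain-rule argument. A secondary, more bookkeeping, difficulty is aligning the $\varepsilon$-level pointwise-in-time identities with the distributional/weak sense in which the limit $u$ is known to satisfy (\ref{e11}), which is dealt with by selecting full-measure sets of Lebesgue times where $u^{\varepsilon}(t)\to u(t)$ strongly in $L^{2}_{\mathrm{loc}}$, a consequence of the uniform energy bound and the $H^{-1}$-equicontinuity of $\partial_{t}u^{\varepsilon}$ already used in Lemma \ref{lem3.3}.
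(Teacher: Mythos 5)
Your core computation coincides with the paper's: the convection term vanishes because $u u_{x}(u-\lambda)_{+}$ is an exact $x$-derivative, and the fractional term is bounded below by $\|\Lambda^{\frac{\alpha}{2}}u_{\lambda}\|_{L^{2}}^{2}$ through kernel positivity — your disjoint-support split $u^{\varepsilon}-\lambda=v-w$ with $\Lambda^{\alpha}w(x)\leq 0$ wherever $v(x)>0$ is exactly the paper's pointwise inequality $\Lambda^{\alpha}u\geq\Lambda^{\alpha}(u-\lambda)_{+}$ on the level set, written in different clothing. The genuine difference is where the computation is performed. The paper multiplies equation (\ref{e11}) directly by $h(u)=(u-\lambda)_{+}$ and argues formally (a sketch following Cheskidov--Dai), never invoking the regularization; you instead derive the inequality for the solutions of (\ref{e30}), where $u^{\varepsilon}$ is smooth so the chain rule, the singular-integral representation and the dropping of $\varepsilon\int|(u^{\varepsilon}_{\lambda})_{x}|^{2}dx$ are all legitimate, and then pass to the limit $\varepsilon\to0$. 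This is more faithful to the paper's own definition of a viscosity weak solution and is how such level-set inequalities are usually justified rigorously; the price is a limiting argument that the paper simply does not confront. One small reading correction: since $w=(u^{\varepsilon}-\lambda)_{-}$ is of size $\lambda$ wherever $u^{\varepsilon}$ is small, it need not belong to $L^{2}(R)$, so the cross term should be understood directly as $\int_{R} v\,\Lambda^{\alpha}w\,dx$ (which is what your pointwise argument actually estimates), not as an $\dot{H}^{\frac{\alpha}{2}}$ pairing; likewise the statement is only meaningful for $\lambda\geq 0$, as in the paper's later use.

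The one soft spot is the limit passage for the term $\|u^{\varepsilon}_{\lambda}(t_{1})\|_{L^{2}}^{2}$ on the right-hand side. Lower semicontinuity and weak continuity, which you invoke, go the wrong way there: you need $\limsup_{\varepsilon\to0}\|u^{\varepsilon}_{\lambda}(t_{1})\|_{L^{2}(R)}^{2}\leq\|u_{\lambda}(t_{1})\|_{L^{2}(R)}^{2}$, i.e. strong convergence in $L^{2}(R)$ (globally in space) at time $t_{1}$, whereas the Aubin--Lions compactness from Lemma \ref{lem3.3} gives strong convergence only in $L^{2}_{loc}$ and only for a.e.\ time along a subsequence; escape of mass to spatial infinity must be excluded (e.g.\ by a uniform tail estimate) or one must settle for the inequality at $t_{1}=0$ and at a.e.\ $t_{1}>0$. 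The latter is in fact sufficient for the way the lemma is used afterwards, since the De Giorgi iteration averages over $t_{1}=t'\in(T_{n-1},T_{n})$, but as stated your closing sentence does not supply the missing upper-semicontinuity ingredient. Note the paper's own proof avoids this issue only by staying formal, so your proposal is, modulo this point, the more complete argument.
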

\begin{proof}
Denote $h(u)=(u-\lambda)_{+}$, it's easy to see that $h$ is Lipschitz and
$$
h'(u)h(u)=h(u).
$$
Multiplying the equation (\ref{e11}) by $h'(u)h(u)=h(u)$ and integrating over $R$ yield
\begin{equation}\label{e6.2}
\frac{1}{2}\frac{d}{dt}\int_{R} h^{2}(u)dx+\int_{R}uu_{x}h'(u)h(u)dx+\int_{R}\Lambda^{\alpha}u h(u)dx=\int_{R}fh(u)dx.
\end{equation}
Note that on the set $\{x\in R: u(x)\leq\lambda\}$, there holds
$$
uu_{x}h'(u)h(u)=0 \quad and \quad \Lambda^{\alpha}u h(u)=0.
$$
On the set $\{x\in R: u(x)>\lambda\}$, we have
$$
uu_{x}h'(u)h(u)=uu_{x}(u-\lambda)
$$
and
\begin{equation}
\begin{aligned}
\Lambda^{\alpha}u(x)
&=C ~~P.V.\int_{R}\frac{u(x)-u(y)}{|x-y|^{1+\alpha}}dy=C ~~P.V.\int_{R}\frac{u(x)-\lambda-(u(y)-\lambda)}{|x-y|^{1+\alpha}}dy  \nonumber\\
&\geq C ~~P.V.\int_{R}\frac{(u(x)-\lambda)_{+}-(u(y)-\lambda)_{+}}{|x-y|^{1+\alpha}}dy=\Lambda^{\alpha}h(u(x)).
\end{aligned}
\end{equation}
Hence
\begin{equation}\label{e6.3}
\int_{R}uu_{x}h'(u)h(u)dx=0 \quad and \quad \int_{R}\Lambda^{\alpha}u h(u)dx\geq\int_{R}|\Lambda^{\frac{\alpha}{2}}h(u)|^{2}dx.
\end{equation}
(\ref{e6.2}) and (\ref{e6.3}) yield the truncated energy inequality (\ref{e6.1}). For the case of $h(u)=(u+\lambda)_{-}$, the similar process can be carried out.
\end{proof}
Now we use similar idea of \cite{13} to prove the following lemma.
\begin{lem}
Assume that $u(t,x)$ is a viscosity solution of (\ref{e11}) on $[0,\infty)$ with $u_{0}\in L^{2}$ and $f\in \dot{H}^{-\frac{\alpha}{2}}(R)\cap L^{2}(R)$. Then
$$
u\in L^{\infty}((\varepsilon,\infty)\times R)
$$
for every $\varepsilon>0$.
\end{lem}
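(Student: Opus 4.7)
The plan is to run a De Giorgi--type level set iteration that promotes the $L^{2}$ energy information of the truncated inequality (\ref{e6.1}) to a pointwise $L^{\infty}$ bound on $(\varepsilon,\infty)\times R$. Fix $\varepsilon>0$ and a parameter $M>0$ to be chosen large at the end. Introduce two strictly increasing sequences
\begin{equation*}
\lambda_{k}=M(1-2^{-k}),\qquad T_{k}=\varepsilon(1-2^{-k}),\qquad k\geq 0,
\end{equation*}
so $\lambda_{k}\nearrow M$ and $T_{k}\nearrow \varepsilon$. Set $v_{k}=(u-\lambda_{k})_{+}$ and
\begin{equation*}
E_{k}=\sup_{t\geq T_{k}}\|v_{k}(t)\|_{L^{2}}^{2}+2\int_{T_{k}}^{\infty}\|\Lambda^{\frac{\alpha}{2}}v_{k}(s)\|_{L^{2}}^{2}\,ds.
\end{equation*}
The target is a nonlinear recursion $E_{k+1}\leq C\,b^{k}\,E_{k}^{1+\gamma}$ with some $b,\gamma>0$, so that the classical iteration lemma forces $E_{k}\to 0$ once $M$ is large enough relative to the data; this yields $u\leq M$ a.e.\ on $(\varepsilon,\infty)\times R$. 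Repeating the argument with $(u+\lambda_{k})_{-}$ in place of $(u-\lambda_{k})_{+}$ produces the symmetric lower bound and finishes the lemma.

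To derive the recursion I would apply (\ref{e6.1}) at level $\lambda_{k+1}$ on $[t_{1},t_{2}]$ with $t_{1}\in[T_{k},T_{k+1}]$ and $t_{2}\geq T_{k+1}$, and then average in $t_{1}$ over an interval of length $\varepsilon 2^{-k-1}$. This trades the pointwise value $\|v_{k+1}(t_{1})\|_{L^{2}}^{2}$ for a time integral at the cost of a factor $2^{k+1}/\varepsilon$. The central set-theoretic input is
\begin{equation*}
\{v_{k+1}>0\}\subset\{v_{k}>M 2^{-k-1}\},\qquad v_{k+1}^{2}\leq \frac{4^{k+1}}{M^{2}}v_{k}^{4},
\end{equation*}
which upgrades $L^{2}$ information at level $k$ into $L^{4}$ information at level $k+1$. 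Because $\alpha>1$, the one-dimensional Sobolev embedding $H^{\frac{\alpha}{2}}(R)\hookrightarrow L^{\infty}(R)$ is at hand, and interpolation gives
\begin{equation*}
\|v_{k}\|_{L^{4}}^{4}\leq \|v_{k}\|_{L^{2}}^{2}\|v_{k}\|_{L^{\infty}}^{2}\leq C\|v_{k}\|_{L^{2}}^{2}\|\Lambda^{\frac{\alpha}{2}}v_{k}\|_{L^{2}}^{2},
\end{equation*}
hence $\int_{T_{k}}^{\infty}\|v_{k}\|_{L^{4}}^{4}\,ds\leq C E_{k}^{2}$. Stitching these ingredients together produces the nonlinear part of the recursion,
\begin{equation*}
E_{k+1}\leq \frac{C\cdot 8^{k}}{\varepsilon M^{2}}E_{k}^{2}+\mathcal{F}_{k},
\end{equation*}
where $\mathcal{F}_{k}:=2\int_{T_{k}}^{\infty}\!\int_{R} f v_{k+1}\,dx\,ds$ collects the forcing.

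The main obstacle, as I see it, is handling $\mathcal{F}_{k}$: in contrast to the transport--dissipation balance, $f$ itself carries no smallness. My plan is to extract the required smallness from the shrinking level set. By H\"older and Sobolev,
\begin{equation*}
\int_{R} f v_{k+1}\,dx\leq \|f\,\mathbf{1}_{\{u>\lambda_{k+1}\}}\|_{L^{4/3}}\|v_{k+1}\|_{L^{4}},
\end{equation*}
while the Chebyshev bound $|\{u>\lambda_{k+1}\}|\leq 4^{k+1}M^{-2}\|v_{k}\|_{L^{2}}^{2}$ together with the absolute continuity of the measure $|f|^{2}\,dx$ forces $\|f\,\mathbf{1}_{\{u>\lambda_{k+1}\}}\|_{L^{4/3}}$ to vanish along the iteration at a rate that yields a genuine power $E_{k}^{\gamma}$ with $\gamma>0$. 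Combining the two contributions gives the desired recursion $E_{k+1}\leq C b^{k} M^{-\sigma} E_{k}^{1+\gamma}$ for some $b,\sigma,\gamma>0$; since $E_{0}$ is finite by the global energy estimate in Corollary \ref{cor3.4}, choosing $M$ sufficiently large renders the iteration convergent to zero. Running the symmetric argument on $(u+\lambda_{k})_{-}$ then gives $\|u\|_{L^{\infty}((\varepsilon,\infty)\times R)}\leq M<\infty$, which is the claim of the lemma.
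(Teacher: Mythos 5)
Your overall framework (levels $\lambda_k=M(1-2^{-k})$, truncated energies, averaging in the lower time endpoint, the Chebyshev bound $\mathbf{1}_{\{v_{k+1}>0\}}\leq 4^{k+1}M^{-2}v_k^2$, and the interpolation $\int\|v_k\|_{L^4}^4\,dt\lesssim E_k^2$ giving the quadratic term $C8^{k}M^{-2}E_k^{2}$) is exactly the paper's De Giorgi scheme, but two steps do not close as written. First, the time horizon: you define $E_k$ with $\sup_{t\geq T_k}$ and $\int_{T_k}^{\infty}$, and you assert that $E_0$ is finite by Corollary \ref{cor3.4}. That corollary only gives $\sup_{t}\|u(t)\|_{L^2}^2+\int_0^T\|\Lambda^{\frac{\alpha}{2}}u\|_{L^2}^2dt\leq\|u_0\|_{L^2}^2+4T\|\Lambda^{-\frac{\alpha}{2}}f\|_{L^2}^2$, which grows linearly in $T$ because of the forcing, so $\int_0^\infty\|\Lambda^{\frac{\alpha}{2}}u\|_{L^2}^2dt$ is in general infinite and your iteration has no finite starting point; likewise $\mathcal{F}_k$ and $\int_{T_k}^{\infty}\|v_{k+1}\|_{L^4}\,dt$ are integrals over an infinite interval with no decay mechanism. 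The paper avoids this by fixing a target time $t_0$, iterating on the shrinking windows $[T_n,t_0]$ with $T_n=t_0(1-2^{-n})$ to get $u(t_0,\cdot)\leq M(t_0)$ with $M\sim E_0^{1/2}t_0^{-1/2}+\|f\|_{L^2}^{1/4}E_0^{1/8}$, and then covers $(\varepsilon,\infty)$ by a time-shift argument: apply the short-time bound with $u(t-1)$ as initial data and control $\|u(t-1)\|_{L^2}$ by the linear-in-time energy estimate (\ref{e6.15}).

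Second, the forcing term, which you yourself identify as the main obstacle, is not handled. ``Absolute continuity of $|f|^2dx$'' gives no rate; its quantitative version is H\"older plus Chebyshev, $\|f\mathbf{1}_{A(t)}\|_{L^{4/3}}\leq\|f\|_{L^2}|A(t)|^{1/4}$ with $|A(t)|\leq 4^{k+1}M^{-2}\|v_k(t)\|_{L^2}^2$, and combined with $\int\|v_{k+1}\|_{L^4}\,dt\lesssim t_0^{3/4}\bigl(\int\|v_k\|_{L^4}^4dt\bigr)^{1/4}\lesssim t_0^{3/4}E_k^{1/2}$ this yields at best a total power $E_k^{1/4}\cdot E_k^{1/2}=E_k^{3/4}$. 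A recursion of the form $E_{k+1}\leq Cb^kM^{-2}E_k^{2}+Cb^kM^{-\sigma}E_k^{3/4}$ does \emph{not} force $E_k\to0$ however large $M$ is: the De Giorgi iteration lemma needs every term superlinear in $E_k$, i.e.\ $E_k^{1+\gamma}$ with $\gamma>0$. The paper obtains such a term by inserting the indicator bound twice inside the $L^2$ pairing, $\int_R|f|v_n\,dx\leq\|f\|_{L^2}\|v_{n-1}\mathbf{1}_{\{v_n>0\}}\|_{L^2}\leq 2^{2n}M^{-2}\|f\|_{L^2}\|v_{n-1}\|_{L^\infty}^2\|v_{n-1}\|_{L^2}$ (up to constants), and then integrating in time on the finite window to get $\lesssim 2^{4n}M^{-4}\|f\|_{L^2}E_{n-1}^{3/2}$, which is the superlinear exponent $3/2$ needed in (\ref{e6.10}). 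So you should replace the $L^{4/3}$--$L^4$ H\"older step by this double use of the level-set inequality, and restructure the argument around a finite window plus time shifting; with those two repairs your proposal becomes the paper's proof.
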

\begin{proof}
Let
$$
\lambda_{n}=M(1-2^{-n}),
$$
where $M$ will be determined later and denote the truncated function by
$$
u_{\lambda_{n}}=(u-\lambda_{n})_{+}.
$$
By using the De Giogi's technology, for a fixed $t_{0}>0$, we have
$$
u(t_{0},x)\leq M, \quad \forall x\in R
$$
provided establishing
\begin{equation}\label{e6.4}
E_{n}:=\sup_{T_{n}\leq t\leq t_{0}}\|u_{\lambda_{n}}(t)\|_{L^{2}}^{2}+2\int^{t_{0}}_{T_{n}}\|\Lambda^{\frac{\alpha}{2}}u_{\lambda_{n}}(t)\|_{L^{2}}^{2}dt\rightarrow 0,
\end{equation}
as $n\rightarrow \infty$, where $T_{n}:=t_{0}(1-2^{^{-n}})$.

Taking $u_{\lambda}=u_{\lambda_{n}}$ and $t_{1}=t'\in(T_{n-1},T_{n})$, $t_{2}=t>T_{n}$ in (\ref{e6.1}), then taking $t_{1}=t'$, $t_{2}=t_{0}$, adding two inequalities and taking sup in $t$ give
$$
E_{n}\leq 2\|u_{\lambda_{n}}(t')\|_{L^{2}}^{2}+4\int_{T_{n-1}}^{t_{0}}\int_{R}|f(x)u_{\lambda_{n}}(s,x)|dxds
$$
for $t'\in (T_{n-1},T_{n})$. Taking the average in $t'$ on $[T_{n-1},T_{n}]$ yields
\begin{equation}\label{e6.5}
E_{n}\leq \frac{2^{n+1}}{t_{0}}\int_{T_{n-1}}^{t_{0}}\int_{R}u_{\lambda_{n}}^{2}(t')dxdt'+4\int_{T_{n-1}}^{t_{0}}\int_{R}|f(x)u_{\lambda_{n}}(t,x)|dxdt.
\end{equation}
Interpolating between $L^{\infty}(L^{2})$ and $L^{2}(\dot{H}^{\frac{\alpha}{2}})$, we have
\begin{equation}\label{e6.6}
\|u_{\lambda_{n}}\|_{L^{4}([T_{n},t_{0})\times R)}^{4}\leq CE_{n}^{2},
\end{equation}
where $C$ is independent of $n$.
Note that on the set $\{(t,x): u_{\lambda_{n}}(t,x)>0\}$, we have
$$
u>\lambda_{n}>\lambda_{n-1}.
$$
Hence
$$
u_{\lambda_{n-1}}=u-\lambda_{n-1}>\lambda_{n}-\lambda_{n-1}=M2^{-n}.
$$
This implies
\begin{equation}\label{e6.7}
1_{\{u_{\lambda_{n}}>0\}}\leq \frac{2^{2n}}{M^{2}}u_{\lambda_{n-1}}^{2}.
\end{equation}
The fact $u_{\lambda_{n}}\leq u_{\lambda_{n-1}}$ and (\ref{e6.6}) (\ref{e6.7}) give
\begin{align}\label{e6.8}
~
 &\frac{2^{n+1}}{t_{0}}\int^{t_{0}}_{T_{n-1}}\int_{R} u_{\lambda_{n}}^{2}(t',x)dxdt'  \nonumber\\
 &\leq \frac{2^{n+1}}{t_{0}}\int^{t_{0}}_{T_{n-1}}\int_{R} u_{\lambda_{n-1}}^{2}(t',x)1_{\{u_{\lambda_{n}>0}\}}dxdt' \nonumber\\
 &\leq \frac{2^{3n+1}}{t_{0}M^{2}}\int^{t_{0}}_{T_{n-1}}\int_{R} u_{\lambda_{n-1}}^{4}(t',x)dxdt' \nonumber\\
 &\leq \frac{C2^{3n+1}}{ t_{0}M^{2}}E_{n-1}^{2}
\end{align}
In addition, for $f\in L^{2}(R)$, we have
\begin{align}\label{e6.9}
~
 &\int^{t_{0}}_{T_{n-1}}\int_{R} |f(x)u_{\lambda_{n}}(t,x)|dxdt  \nonumber\\
 &\leq \|f\|_{L^{2}}\int^{t_{0}}_{T_{n-1}}(\int_{R} |u_{\lambda_{n}}|^{2}dx)^{\frac{1}{2}}dt \nonumber\\
 &\leq \|f\|_{L^{2}}\int^{t_{0}}_{T_{n-1}}(\int_{R} |u_{\lambda_{n-1}}|^{2}1^{2}_{\{u_{\lambda_{n}>0}\}}dx)^{\frac{1}{2}}dt   \nonumber\\
 &\leq \frac{2^{4n}\|f\|_{L^{2}}}{M^{4}}\int^{t_{0}}_{T_{n-1}}(\int_{R} |u_{\lambda_{n-1}}|^{4}|u_{\lambda_{n-1}}|^{2}dx)^{\frac{1}{2}}dt  \nonumber\\
 &\leq \frac{2^{4n}\|f\|_{L^{2}}}{M^{4}}\int^{t_{0}}_{T_{n-1}}\|u_{\lambda_{n-1}}(t)\|_{L^{\infty}}^{2}(\int_{R}|u_{\lambda_{n-1}}(t)|^{2}dx)^{\frac{1}{2}}dt \nonumber\\
 &\leq \frac{2^{4n}\|f\|_{L^{2}}}{M^{4}}\sup_{T_{n-1}\leq t\leq t_{0}}\|u_{\lambda_{n-1}}(t)\|_{L^{2}}\int^{t_{0}}_{T_{n-1}}\|u_{\lambda_{n-1}}(t)\|_{L^{\infty}}^{2}dt \nonumber\\
 &\leq \frac{C2^{4n-1}\|f\|_{L^{2}}}{ M^{4}} E_{n-1}^{\frac{3}{2}}.
\end{align}
Therefore, (\ref{e6.5}), (\ref{e6.8}) and (\ref{e6.9}) yield
\begin{equation}\label{e6.10}
E_{n}\leq \frac{C2^{3n+1}}{ t_{0}M^{2}}E_{n-1}^{2}+\frac{C2^{4n+1}\|f\|_{L^{2}}}{ M^{4}} E_{n-1}^{\frac{3}{2}},
\end{equation}
where the constant $C$ is independent of $n$. For a large enough
\begin{equation}\label{e6.11}
M \sim \frac{E_{0}^{\frac{1}{2}}}{( t_{0})^{\frac{1}{2}}}+\|f\|_{L^{2}}^{\frac{1}{4}}E_{0}^{\frac{1}{8}},
\end{equation}
the nonlinear iteration inequality (\ref{e6.10}) implies that $E_{n}$ converges to $0$ as $n\rightarrow \infty$. Then $u(t_{0},x)\leq M$ for almost every $x$. The same argument can be applied to $u_{\lambda_{n}}=(u+\lambda_{n})_{-}$.

On the other hand, it follows from the energy inequality that
\begin{equation}
 \begin{aligned}
\|u(t_{0})\|_{L^{2}}^{2}+2\int_{0}^{t_{0}}\|\Lambda^{\frac{\alpha}{2}}u(t)\|_{L^{2}}^{2}dt
 &\leq \|u_{0}\|_{L^{2}}^{2}+2\int^{t_{0}}_{0}\int_{R}f(x)u(t,x)dxdt \nonumber\\
 &\leq \|u_{0}\|_{L^{2}}^{2}+\frac{C}{}\int^{t_{0}}_{0}\|f\|_{\dot{H}^{-\frac{\alpha}{2}}}^{2}dt+
 \int^{t_{0}}_{0}\|u\|_{\dot{H}^{\frac{\alpha}{2}}}^{2}dt,
 \end{aligned}
 \end{equation}
 which means
 \begin{equation}\label{e6.12}
E_{0}\leq 2\|u_{0}\|_{L^{2}}^{2}+C\|f\|_{\dot{H}^{-\frac{\alpha}{2}}}^{2}t_{0}.
\end{equation}
(\ref{e6.11}), (\ref{e6.12}) and  Young's inequality imply
 \begin{align}\label{e6.13}
\|u(t)\|_{L^{\infty}}
 &\lesssim E_{0}^{\frac{1}{2}}( t)^{-\frac{1}{2}}+[\|f\|_{L^{2}}^{\frac{1}{3}}( t)^{\frac{1}{6}}]^{\frac{3}{4}}[E_{0}^{\frac{1}{2}}( t)^{-\frac{1}{2}}]^{\frac{1}{4}} \nonumber\\
 &\lesssim E_{0}^{\frac{1}{2}}( t)^{-\frac{1}{2}}+\|f\|_{L^{2}}^{\frac{1}{3}}( t)^{\frac{1}{6}} \nonumber\\
 &\lesssim\|u_{0}\|_{L^{2}}(t)^{-\frac{1}{2}}+\|f\|_{\dot{H}^{-\frac{\alpha}{2}}}+\|f\|_{L^{2}}^{\frac{1}{3}}t^{\frac{1}{6}} \nonumber\\
 &\lesssim \|u_{0}\|_{L^{2}}(t)^{-\frac{1}{2}}+\|f\|_{\dot{H}^{-\frac{\alpha}{2}}}+\|f\|_{L^{2}}^{\frac{1}{3}}, \quad for~~t\leq 1.
 \end{align}
To show that this bound also holds for $t>1$, fixing $t=\tau=1$ in (\ref{e6.13}) and shifting it by $t-\tau$ in time, thus we obtain
 \begin{equation}\label{e6.14}
\|u(t)\|_{L^{\infty}}\lesssim\frac{ \|u(t-\tau)\|_{L^{2}}}{( t)^{\frac{1}{2}}}+\frac{\|f\|_{\dot{H}^{-\frac{\alpha}{2}}}}{}+\|f\|_{L^{2}}^{\frac{1}{3}}, \quad t\geq \tau.
\end{equation}
Due to the energy inequality in Corollary \ref{cor3.4}, we have
 \begin{equation}\label{e6.15}
\|u(t-\tau)\|_{L^{2}}\leq \|u_{0}\|_{L^{2}}+(\frac{t-\tau}{2})^{\frac{1}{2}}\|f\|_{\dot{H}^{-\frac{\alpha}{2}}}.
\end{equation}
Combining this with (\ref{e6.14}), we get
$$
\|u(t)\|_{L^{\infty}}\lesssim \frac{\|u_{0}\|_{L^{2}}}{( t)^{\frac{1}{2}}}+\|f\|_{\dot{H}^{-\frac{\alpha}{2}}}+\|f\|_{\dot{H}^{-\frac{\alpha}{2}}}+\|f\|_{L^{2}}^{\frac{1}{3}}, \quad t\geq \tau.
$$
\end{proof}

\section*{Acknowledgments}
This project is supported by National Natural Science Foundation of China (No:11571057).

\end{document}